\theoremstyle{plain}
\newtheorem{thm}{Theorem}[section]
\newtheorem{lem}[thm]{Lemma}
\newtheorem{cor}[thm]{Corollary}
\theoremstyle{definition}
\theoremstyle{remark}
\newtheorem{rem}[thm]{Remark}
\begin{document}
\baselineskip=19pt
\title{On diffeomorphisms over non-orientable surfaces \\ 
standardly embedded in the 4-sphere}
\author{Susumu Hirose}
\address{Department of Mathematics,  
Faculty of Science and Technology, 
Tokyo University of Science, 
Noda, Chiba, 278-8510, Japan} 
\email{hirose\b{ }susumu@ma.noda.tus.ac.jp}
\thanks{This research was supported by Grant-in-Aid for 
Scientific Research (C) (No. 20540083), 
Japan Society for the Promotion of Science. }
\begin{abstract} 
For a non-orientable closed surface standardly embedded in the 4-sphere, 
a diffeomorphism over this surface is extendable 
if and only if this diffeomorphism preserves 
the Guillou-Marin quadratic form of this embedded surface. 
\end{abstract}
\maketitle

\section{Introduction}

Let $S$ be a closed surface and $e$ be a smooth embedding of $S$ into $S^4$. 
A diffeomorphism $\phi$ over $S$ is {\em $e$-extendable\/} if 
there is an orientation preserving diffeomorphism $\Phi$ of $S^4$ 
such that $\Phi |_{S} = \phi$. 
The natural problem to ask is: 
{\em Find a necessary and sufficient condition 
for a diffeomorphism over $S$ to be $e$-extendable?\/} 

For some special embeddings of closed surfaces in $4$-manifolds, 
we have answers to the above problem (for example, \cite{Montesinos}, 
\cite{Hirose}, \cite{H-Y}). 
An embedding $e$ of the orientable surface $\Sigma_g$ into $S^4$ is called standard 
if $e(\Sigma_g)$ is the boundary of $3$-dimensional handlebody embedded in $S^4$. 
In \cite{Montesinos} and \cite{Hirose}, we showed: 
\begin{thm}[\cite{Montesinos} ($g=1$), \cite{Hirose} ($g \geq 2$)] 
Let $\Sigma_g$ be standardly embedded in $S^4$. 
An orientation preserving diffeomorphism $\phi$ over 
the $\Sigma_g$ is extendable to $S^4$ 
if and only if $\phi$ preserves the Rokhlin quadratic form 
of the $\Sigma_g$ standardly embedded in $S^4$. 
\end{thm}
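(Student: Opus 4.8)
The plan is to treat the two implications separately; the forward (necessity) direction is routine and the backward (sufficiency) direction carries the content.

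\emph{Necessity.} The Rokhlin quadratic form is natural under ambient diffeomorphisms: for a smooth embedding $f\colon\Sigma_g\hookrightarrow S^4$ and an orientation-preserving diffeomorphism $\Psi$ of $S^4$ one has $q_{\Psi\circ f}=q_f$, since $q$ is computed from membranes bounding curves on $f(\Sigma_g)$ together with their normal framings and $\Psi$ carries such data for $f(\Sigma_g)$ to the corresponding data for $\Psi(f(\Sigma_g))$; also $q_{f\circ\psi}=q_f\circ\psi_*$ for $\psi\in\mathcal M_g$. Hence if $\phi$ is extended by an orientation-preserving diffeomorphism $\Phi$ of $S^4$, so that $\Phi\circ e=e\circ\phi$, then $q_e\circ\phi_*=q_{e\circ\phi}=q_{\Phi\circ e}=q_e$, i.e. $\phi$ preserves $q_e$. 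Since extendability is an isotopy invariant preserved under composition and inversion, the set $\mathcal E_e\subseteq\mathcal M_g$ of $e$-extendable mapping classes is a subgroup, and we have shown $\mathcal E_e\subseteq\mathcal M_g[q_e]:=\{\phi:q_e\circ\phi_*=q_e\}$.

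\emph{A generating set of $\mathcal M_g[q_e]$.} The theorem reduces to $\mathcal M_g[q_e]\subseteq\mathcal E_e$, and since $\mathcal E_e$ is a subgroup it suffices to realize some generating set of $\mathcal M_g[q_e]$ by extendable diffeomorphisms. Under $\mathcal M_g\to\mathrm{Sp}(2g,\mathbb Z)\to\mathrm{Sp}(2g,\mathbb F_2)$ the image of $\mathcal M_g[q_e]$ is the orthogonal group $O^{+}(2g,\mathbb F_2)$ stabilizing $q_e$ (the standard embedding has $\mathrm{Arf}\,q_e=0$), which by Dieudonn\'e's theorem, with the usual low-rank caveats, is generated by the transvections along $q_e$-anisotropic vectors; these lift to the Dehn twists $t_c$ with $q_e(c)=1$, i.e. exactly the Dehn twists preserving $q_e$. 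The kernel of $\mathcal M_g[q_e]\to O^{+}(2g,\mathbb F_2)$ is the level-two subgroup $\mathcal M_g[2]$, an extension of the congruence subgroup $\mathrm{Sp}(2g,\mathbb Z)[2]$ (generated, as is known, by squares of transvections) by the Torelli group $\mathcal I_g$ (generated by bounding-pair maps and separating twists, by Birman--Powell). So it is enough to show extendability for: (i) the twists $t_c$ with $q_e(c)=1$; (ii) the squares $t_c^{2}$ of arbitrary Dehn twists; (iii) bounding-pair maps and separating twists; and (iv) the finite-order hyperelliptic-type generators needed in small genus.

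\emph{Constructing the extensions.} For the standard embedding $S^4\setminus e(\Sigma_g)$ has abelian fundamental group and every curve on $e(\Sigma_g)$ is null-homologous off the surface, so any simple closed curve $c$ on $e(\Sigma_g)$ bounds an embedded compact surface (a ``membrane'') in $S^4$ with interior disjoint from $e(\Sigma_g)$; likewise the subsurface cut off by a separating curve, or bounded by a bounding pair, can be pushed off to such a membrane. Rotating the normal $D^2$-bundle of a membrane through a full turn as one goes around it defines an orientation-preserving diffeomorphism of $S^4$ that restricts on $e(\Sigma_g)$ to $t_c$ when the membrane is a M\"obius band, and to $t_c^{2}$ when it is a disk or an orientable surface with connected boundary (for a bounding pair the two boundary contributions combine to $t_{\gamma_1}t_{\gamma_2}^{-1}$) --- provided the normal framing of the membrane matches the surface framing along the boundary. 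The hypothesis $q_e(c)=1$ is precisely what permits the choice of a M\"obius-band membrane with matching framing, and hence realizes $t_c$; for $t_c^{2}$ no hypothesis is needed, since $q_e(c)$, measuring the framing defect of a disk, is a $\mathbb Z/2$-obstruction annihilated by doubling. Diffeomorphisms extending over the standardly embedded handlebody $H_g$ --- permutations of its handles, twists along curves bounding disks in $H_g$, and separating twists along splitting disks --- give a further supply of extendable classes and serve to put the curves $c$ into standard position.

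\emph{Where the difficulty lies.} The main obstacle is dovetailing the group theory with the geometry: one must choose the generators of $\mathcal M_g[q_e]$ so that each is represented by curves, bounding pairs, or subsurfaces sitting in good position relative to the standard handlebody, and one must track the normal framings of the membranes carefully enough that ``$\phi$ preserves $q_e$'' becomes exactly ``the membranes needed to extend $\phi$ are available''. This framing bookkeeping is the technical heart and is where the quadratic form enters decisively. For $g=1$, $\mathcal M_1[q_e]$ is an explicit index-three subgroup of $SL(2,\mathbb Z)$ and the whole verification can be done by a direct computation, which is Montesinos's approach; for $g\ge2$, the generation of the spin mapping class group and the systematic realization of the extensions are the real work, carried out by Hirose.
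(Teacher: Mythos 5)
This theorem is not proved in the paper you were given: it is quoted there as background, with the proofs residing in \cite{Montesinos} ($g=1$) and \cite{Hirose} ($g\geq 2$); the paper itself only carries out the analogous program for the non-orientable case (find explicit generators of the stabilizer of the quadratic form, then extend each generator). Your outline follows the same general scheme as those proofs --- necessity by naturality of the form (this part of your argument is fine), sufficiency by generating the stabilizer $\mathcal M_g[q_e]$ and realizing generators by extendable diffeomorphisms --- but as a proof it has a genuine gap: both halves of the sufficiency argument are asserted rather than carried out, and you yourself defer them (``the real work, carried out by Hirose''). Concretely, you never produce an explicit generating set of $\mathcal M_g[q_e]$ consisting of elements you can actually extend, and you never verify the extensions.

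Several intermediate claims also need repair or justification. The assertion that $\mathrm{Sp}(2g,\mathbb Z)[2]$ is generated by squares of transvections is stated ``as is known'' but is not obvious and fails verbatim in low genus (e.g.\ $-I\in\Gamma(2)$ is not a product of transvection squares for $g=1$), so the reduction of $\mathcal M_g[2]$ to items (ii)--(iii) is unproven; likewise ``exactly the Dehn twists preserving $q_e$'' overlooks twists about $\mathbb Z_2$-null-homologous curves. On the geometric side, the claim that every simple closed curve on $e(\Sigma_g)$ bounds an embedded membrane with interior disjoint from the surface, and that such a membrane can be ``rotated'' to extend $t_c$ or $t_c^2$, is exactly the framing-sensitive statement whose proof is the content of the theorem: in the actual arguments the realization of $t_c$ with $q_e(c)=1$ comes from placing an annular neighborhood of $c$ as a Hopf band in the equatorial $S^3$, and of $t_c^2$ (and bounding-pair maps) from trivially embedded annuli, together with case-by-case positioning of the chosen generators relative to the standard handlebody --- precisely the bookkeeping you label ``the technical heart'' and omit. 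So the proposal is a correct high-level strategy matching the cited proofs, but not a proof: the key lemmas (realizable generating set; extension constructions with framing control) are missing.
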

In this paper, we consider the same kind of problem for 
non-orientable surfaces embedded in $S^4$. 
\begin{figure}[hbtp]
\includegraphics[height=15cm]{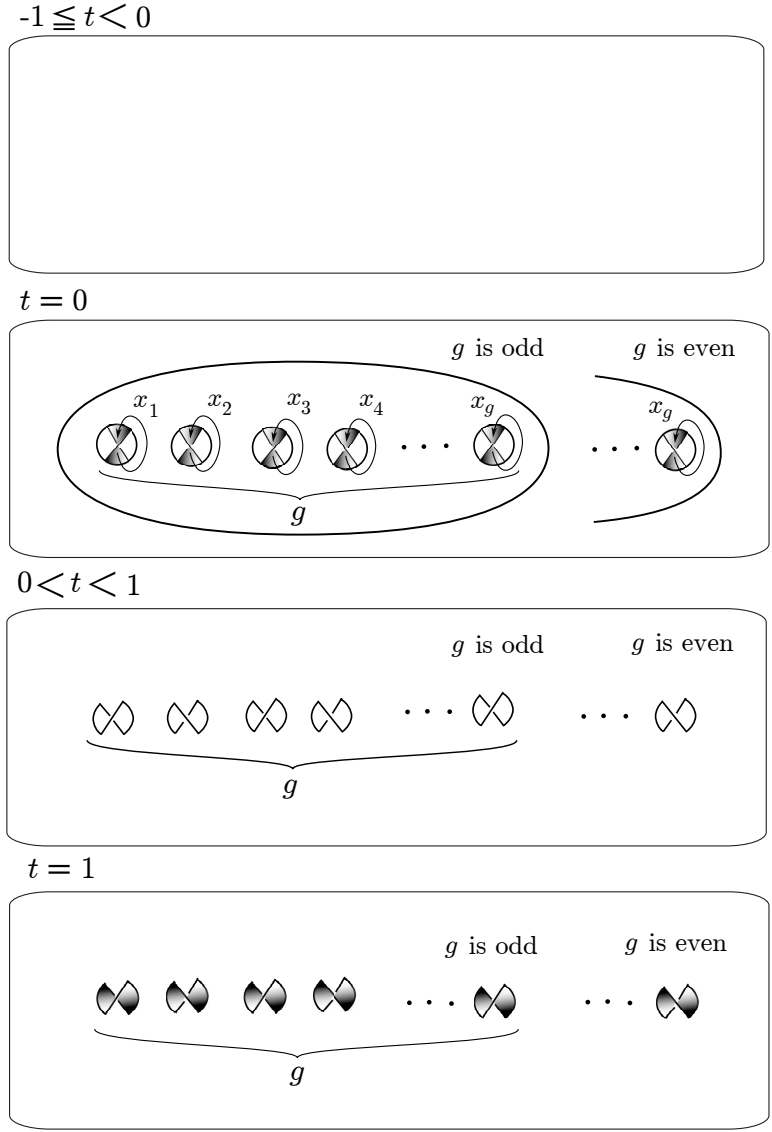}
\caption{
%tmp
%{\bf fig:o-standard-basis} 
%
The motion picture of the $o$-standard embedding of $N_g$ into $S^4$.}
\label{fig:o-standard-basis}
\end{figure}
Let $N_g$ be a connected non-orientable surface constructed 
from $g$ projective planes by connected sum. 
We call $N_g$ {\em the closed non-orientable surface of genus\/} $g$. 
Let $S^3 \times [-1,1]$ be a closed tubular neighborhood of the equator $S^3$ 
in $S^4$. Then $S^4 - S^3 \times(-1,1)$ consists of two $4$-balls. 
An embedding $os : N_g \hookrightarrow S^4$ is {\em $o$-standard\/} 
if $os(N_g) \subset S^3 \times [-1,1]$ and as shown in Figure \ref{fig:o-standard-basis}.
The main result of this paper is:
\begin{thm}\label{thm:extendable} 
%tmp
%(thm: extendable)  
%
The diffeomorphism $\phi$ over $N_g$ is $os$-extendable 
if and only if $\phi$ preserves the Guillou-Marin quadratic form of the $N_g$ 
$o$-standardly embedded in $S^4$.  
\end{thm}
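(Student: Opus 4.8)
\bigskip
\noindent\emph{Plan of proof.}
The two implications are handled separately, and the ``only if'' part is the easy one. The Guillou--Marin quadratic form
\[
q \colon H_1(N_g;\mathbb{Z}/2) \longrightarrow \mathbb{Z}/4
\]
of $os(N_g)\subset S^4$ is defined purely in terms of the embedded pair $(S^4, os(N_g))$ --- via membranes in $S^4$ bounded by curves on the surface, their relative framings, and the double points of generic push-offs --- so it is an invariant of that pair under orientation preserving diffeomorphisms of $S^4$. Hence, if $\phi=\Phi|_{N_g}$ for some orientation preserving diffeomorphism $\Phi$ of $S^4$, then $\phi_*$ preserves $q$. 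Consequently the group $\mathcal{E}$ of $os$-extendable mapping classes of $N_g$ is a subgroup of the group $\mathcal{M}_q$ of mapping classes preserving $q$, and it remains to prove the reverse inclusion $\mathcal{M}_q\subseteq\mathcal{E}$.

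For this I would first read off, from the motion picture in Figure~\ref{fig:o-standard-basis}, a standard system of generating curves on $N_g$ together with an explicit list of $os$-extendable diffeomorphisms: (i) Dehn twists along two-sided curves that can be isotoped into a single slice $S^3\times\{t\}$ and there bound a disk (or bound a disk in a complementary handlebody), which extend by the rotation/spinning construction already used to prove the orientable case (Theorem~1.1); (ii) the crosscap slide (Y-homeomorphism) along the standard crosscaps of the $o$-standard model, which extends because the non-orientability of $os$ is carried by a local standard piece on which this slide is ambient; (iii) permutations and handle/crosscap interchanges, realized by ambient isotopies of the equatorial $S^3$ swept across the interval $[-1,1]$; and, if needed, (iv) one further Dehn twist along a curve running over a crosscap, used to realize the $\mathbb{Z}/4$-values of $q$. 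For each of these one writes down the extension $\Phi$ directly from the product structure.

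Next I would compute the action of the diffeomorphisms in (i)--(iv) on the pair $(H_1(N_g;\mathbb{Z}/2),\,q)$, identify their images in the orthogonal group $O(q)$ of the Guillou--Marin form, and prove that these images generate $O(q)$. Combining this with the short exact sequence
\[
1 \longrightarrow \mathcal{I}_2(N_g) \longrightarrow \mathcal{M}_q \longrightarrow O(q) \longrightarrow 1,
\]
where $\mathcal{I}_2(N_g)$ is the $\mathbb{Z}/2$-Torelli (level two) subgroup, which is generated by Dehn twists along $\mathbb{Z}/2$-separating simple closed curves and by crosscap slides supported in $\mathbb{Z}/2$-trivial subsurfaces --- all of which lie in $\mathcal{E}$ by items (i) and (ii), since $q$ vanishes on such curves --- one concludes that $\mathcal{M}_q$ is generated by $os$-extendable diffeomorphisms, i.e.\ $\mathcal{M}_q\subseteq\mathcal{E}$. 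Known generating sets for $\mathcal{M}(N_g)$ (Lickorish, Chillingworth) are the starting point for identifying generators of the finite-index subgroup $\mathcal{M}_q$.

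The main obstacle will be the two generation statements in the last step: that the explicitly extendable diffeomorphisms surject onto $O(q)$, and that $\mathcal{I}_2(N_g)$ is generated by twists along $q$-null curves together with $\mathbb{Z}/2$-trivial crosscap slides. The group $O(q)$ attached to the non-orientable Guillou--Marin form is not one of the classical symplectic or orthogonal groups over $\mathbb{Z}/2$, so its generation by the transvection-type elements coming from geometric Dehn twists --- and in particular the bookkeeping of how the crosscap slide interacts with the $\mathbb{Z}/4$-valued $q$ --- requires a careful case analysis on the standard basis. A secondary, more geometric, difficulty is the explicit verification that the crosscap slide extends over $S^4$, for which I would use the product structure $S^3\times[-1,1]$ and an isotopy of the equatorial $S^3$ dragging one crosscap around a loop.
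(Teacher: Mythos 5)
Your overall architecture is the same as the paper's: the ``only if'' direction is immediate from the diffeomorphism invariance of the Guillou--Marin form, and for the converse you pass to the subgroup of $q_{os}$-preserving classes, use the exact sequence whose kernel is the level-two subgroup $\Gamma_2(N_g)$ and whose quotient is the finite orthogonal-type group of $q_{os}$, and try to exhibit $os$-extendable lifts of generators. The genuine gap is in the kernel step. You assert that the level-two subgroup is generated by Dehn twists along $\mathbb{Z}/2$-trivial curves and by crosscap slides supported in $\mathbb{Z}/2$-trivial subsurfaces, and that these all lie in your $\mathcal{E}$ ``since $q$ vanishes on such curves.'' That inference is not valid: vanishing of $q_{os}$ on the relevant classes only shows that such a map \emph{preserves} $q_{os}$ (which is automatic for anything acting trivially on $H_1(N_g;\mathbb{Z}_2)$), not that it extends over $S^4$; extendability depends on the geometric position of the supporting curves relative to the $o$-standard embedding, not on homological data. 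Moreover your generation statement for the level-two group is not the known theorem and would itself require proof. What is actually available is Szepietowski's Theorem \ref{thm:level2} (and his finite generating set, Theorem \ref{thm:gamma2-gen}): $\Gamma_2(N_g)$ is generated by Y-homeomorphisms, and among these only the special $Y_{i,j}$ --- leg a standard crosscap circle $x_i$, arm a standard arc --- are seen to extend directly, by sliding a M\"obius band along a tube in $S^3\times[-1,1]$ (Figure \ref{fig:Y-tube}). The technical heart of the paper is precisely the reduction of an \emph{arbitrary} Y-homeomorphism to a product of conjugates, by already-extendable elements, of these short-leg ones (Lemmas \ref{lem:short-leg-gen}, \ref{lem:all-Y-hom-short}, \ref{lem:G-g-eq-r-circle}, \ref{lem:product-Y-homeo}, or alternatively Theorem \ref{thm:gamma2-gen} together with Lemma \ref{lem:gamma2-short}). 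Your plan contains no substitute for this reduction, so the inclusion $\Gamma_2(N_g)\subseteq\mathcal{E}$ --- and with it the whole converse --- is unproven.

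A secondary, smaller gap concerns the quotient: you say you would ``prove that these images generate $O(q)$'' and correctly flag this as an obstacle, but you give no route. The paper resolves it by quoting Nowik's Theorem \ref{thm:generator-Og} (generation by transvections $T_a$ with $q_{os}(a)=2$ and by triples $T_aT_bT_{a+b}$ with $q_{os}(a)=q_{os}(b)=q_{os}(a+b)=0$) and then reducing these, in Lemma \ref{lem:gen-Og-os-red}, to the finitely many transvections realized by $t_{d_i}$ and triples realized by $t_{a_i}t_{a_{i+2}}t_{c_i}$, whose $os$-extendability is then checked geometrically (a Hopf band in the equatorial $S^3$, and the ``$C_1C_3C_5$'' argument from the orientable case). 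Some such input, or an independent proof of generation of this nonstandard orthogonal group, is needed before your last step can be carried out.
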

\section{Guillou-Marin quadratic form}\label{sec:G-M form}

For a smooth embedding $e$ of 
the closed non-orientable surface $N_g$ of genus $g$ into $S^4$, 
Guillou and Marin (\cite{G-M} see also \cite{Matsumoto}) defined 
a quadratic form $q_e : H_1(N_g ; \mathbb{Z}_2) \to \mathbb{Z}_4$ 
as follows: 
Let $C$ be an immersed circle on $N_g$, and 
$D$ be a connected orientable surface immersed in $S^4$ 
such that $\partial D = C$, and $D$ is not tangent to $N_g$. 
Let $\nu_D$ be the normal bundle of $D$, then 
$\nu_D |_C $ is a solid torus with a trivialization induced from 
any trivialization of $\nu_D$.  
Let $N_{N_g}(C)$ be the tubular neighborhood of $C$ in $N_g$, then 
$N_{N_g}(C)$ is an twisted annulus or a M\"obius band in $\nu_D |_C $. 
We denote by $n(D)$ the number of right hand half-twists of $N_{N_g}(C)$ 
with respect to the trivialization of $\nu_D |_C$. 
Let $D \cdot F$ be the mod-$2$ intersection number between $D$ and $F$, 
$Self(C)$ be the mod-$2$ double points number of $C$, 
and $2\times$ be an injection $\mathbb{Z}_2 \to \mathbb{Z}_4$ 
defined by $2 \times [n]_2 = [2n]_4$. 
Then the number $n(D) + 2 \times D \cdot F + 2 \times Self(C)\pmod 4 $ depend only 
on the mod-$2$ homology class $[C]$ of $C$. 
Hence, we define 
$$q_e ([C]) = n(D) + 2 \times D \cdot F + 2 \times Self(C) \pmod 4. $$
This map $q_e$ is called Guillou-Marin {\em quadratic form\/}, 
since $q_e$ satisfies 
$$q_{e} (x+y) = q_{e}(x) + q_{e}(y) + 2 \times (x \cdot y)_2, $$ 
where $(x \cdot y)_2$ means the mod-$2$ intersection number between $x$ and $y$. 
For example, $q_{os}(x_{2i-1}) = +1, q_{os}(x_{2i}) = -1$ 
for the basis $\{ x_1, \ldots, x_g \}$ 
of $H_1(N_g ; \mathbb{Z}_2)$ shown in Figure \ref{fig:o-standard-basis}. 
This quadratic form $q_e$ is a non-orientable analogy of 
Rokhlin quadratic form. 

A diffeomorphism $\phi$ over $N_g$ is {\em $e$-extendable\/} 
if there is an orientation preserving diffeomorphism $\Phi$ of $S^4$ 
such that the following diagram is commutative, 
\begin{equation*} 
\begin{CD}
N_g @>{e}>> S^4 \\
@V{\phi}VV @VV{\Phi}V \\
N_g @>{e}>> S^4. 
\end{CD}
\end{equation*}
If the diffeomorphisms $\phi_1$ over $N_g$ is $e$-extendable, 
and $\phi_1$ is isotopic to $\phi_2$, 
then $\phi_2$ is $e$-extendable. 
Therefore, $e$-extendability is a property about isotopy classes of 
diffeomorphisms over $N_g$. 
The group $\mathcal{M}(N_g)$ of isotopy classes of all diffeomorphisms 
over $N_g$ is called {\em the mapping class group of $N_g$\/}. 
An element $\phi$ of $\mathcal{M}(N_g)$ is {\em $e$-extendable\/} 
if there is an $e$-extendable representative of $\phi$.   
By the definition of $q_e$, we can see that 
if $\phi \in \mathcal{M}(N_g)$ is $e$-extendable then $\phi$ preserves $q_e$, i.e. 
$q_e ( \phi_*(x)) = q_e (x)$ for every $x \in H_1(N_g ; \mathbb{Z}_2)$. 
What we would like to know is whether $\phi \in \mathcal{M}(N_g)$ is $e$-extendable 
when $\phi$ preserves $q_e$. 
The answer to this problem would be depend on the embedding $e$. 
In this paper, we consider the case where $e$ is the $o$-standard embedding. 

\section{Generators for $\mathcal{M}(N_g)$}
\begin{figure}[hbtp]
\includegraphics[height=4cm]{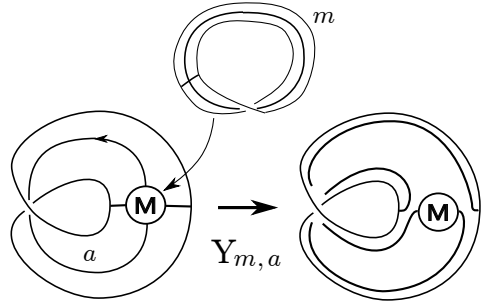}
\caption{$M$ with circle indicates a place where to attach a M\"obius band}
\label{fig:Y-homeo}
\end{figure}
A simple closed curve $c$ on $N_g$ is an A-circle (resp. an M-circle), 
if the tubular neighborhood of $c$ is an annulus (resp. a M\"obius band). 
We denote by $t_c$ the Dehn twist about an A-circle $c$ on $N_g$. 
In each figure, we indicate the direction of a Dehn twist by an arrow. 
Lickorish \cite{Lickorish1, Lickorish2} showed that Dehn twists 
and {\it $Y$-homeomorphisms\/} generate $\mathcal{M}(N_g)$. 
We review the definition of $Y$-homeomorphism. 
Let $m$ be an M-circle 
and $a$ be an oriented A-circle in $N_g$ 
such that $m$ and $a$ transversely intersect in one point. 
Let $K \subset N_g$ be a regular neighborhood of $m \cup a$, 
which is a unioun of the tubular neighborhoods of $m$ and $a$ 
and then is homeomorphic to the Klein bottle with a hole. 
Let $M$ be a regular neighborhood of $m$. 
We denote by $Y_{m,a}$ a homeomorphism over $N_g$ which is described 
as the result of pushing $M$ once along $a$ keeping the boundary of $K$ fixed 
(see Figure \ref{fig:Y-homeo}). 
We call $Y_{m,a}$ a $Y$-homeomorphism. 
Szepietowski \cite{Szepietowski} showed an interesting 
results on the proper subgroup of $\mathcal{M}(N_g)$ generated 
by all $Y$-homeomorphisms. 

\begin{thm}[\cite{Szepietowski}]\label{thm:level2}
%tmp
%(thm:level2)
%
$\Gamma_2(N_g) = \{ \phi \in \mathcal{M}(N_g) \, | 
\, \phi_* : H_1(N_g ; \mathbb{Z}_2) \to H_1(N_g ; \mathbb{Z}_2) = id \}$ 
is generated by $Y$-homeomorphisms. 
\end{thm}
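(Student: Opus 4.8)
The plan is to introduce the subgroup $\mathcal{Y}\trianglelefteq\mathcal{M}(N_g)$ generated by all $Y$-homeomorphisms; it is normal because a conjugate of $Y_{m,a}$ is again a $Y$-homeomorphism, $\psi Y_{m,a}\psi^{-1}=Y_{\psi(m),\psi(a)}$. Since $Y_{m,a}$ is supported on a Klein-bottle-with-hole and induces the identity on the $\mathbb{Z}_2$-homology of that subsurface, one has $\mathcal{Y}\subseteq\Gamma_2(N_g)$, and the content of the theorem is the reverse inclusion. I would prove it by showing that the induced surjection $\pi\colon\mathcal{M}(N_g)/\mathcal{Y}\twoheadrightarrow\mathcal{M}(N_g)/\Gamma_2(N_g)$ is injective. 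The target $\mathcal{M}(N_g)/\Gamma_2(N_g)$ is the image $O$ of $\mathcal{M}(N_g)$ in $\mathrm{Aut}(H_1(N_g;\mathbb{Z}_2))$, the orthogonal group of the mod-$2$ intersection form, and a Dehn twist $t_c$ about a two-sided curve maps to the transvection $\tau_{[c]}\colon x\mapsto x+(x\cdot[c])[c]$ (a form automorphism, since a two-sided curve has vanishing mod-$2$ self-intersection). By Lickorish's theorem $\mathcal{M}(N_g)$ is generated by Dehn twists and $Y$-homeomorphisms, so $\mathcal{M}(N_g)/\mathcal{Y}$ is generated by the images of Dehn twists and $O$ by the corresponding transvections.

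First I would fix a finite generating set of $\mathcal{M}(N_g)$ consisting of Dehn twists $t_{c_1},\dots,t_{c_k}$ about explicit curves together with one $Y$-homeomorphism, so that $O=\langle\tau_{[c_1]},\dots,\tau_{[c_k]}\rangle$. Next I would take a presentation of $O$ on these transvection generators; its relators are of a few types: commutations $[\tau_{[c_i]},\tau_{[c_j]}]$ when $[c_i]\cdot[c_j]=0$, braid relators $\tau_{[c_i]}\tau_{[c_j]}\tau_{[c_i]}=\tau_{[c_j]}\tau_{[c_i]}\tau_{[c_j]}$ when $[c_i]\cdot[c_j]=1$, the involution relators $\tau_{[c_i]}^2$, and a finite list of further relators supported on rank-two and rank-three subforms. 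For injectivity of $\pi$ it suffices to check that each relator, read back as a word in the $t_{c_i}$, represents an element of $\mathcal{Y}$. The commutation and braid relators lift verbatim, since the generating curves can be realized disjoint or meeting in one point; the involution relators $t_{c_i}^2$ and the low-rank relators are disposed of using the standard two- and three-chain and lantern relations among Dehn twists on $N_g$ together with the key identity $Y_{m,a}^{2}=t_{\partial K}^{\pm1}$ (the square of a crosscap slide equals a Dehn twist about the boundary of the Klein-bottle-with-hole $K$ it is supported on), which is exactly what lets a Dehn twist, or its square, be absorbed into $\mathcal{Y}$. Once every relator of $O$ is seen to hold already in $\mathcal{M}(N_g)/\mathcal{Y}$, the map $\pi$ is an isomorphism, so $\mathcal{Y}=\Gamma_2(N_g)$ and the theorem follows.

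The main obstacle is this last step: writing down a complete and correct presentation of the orthogonal group of the non-degenerate odd-type $\mathbb{Z}_2$-form in every genus, and then matching each of its defining relators to an honest topological relation among Dehn twists and $Y$-homeomorphisms on $N_g$ — in particular confirming that the involution relators and all the low-rank relators become trivial modulo $\mathcal{Y}$. Small genus ($g\le 3$) should be handled directly, and care is needed to choose the generating curves generically enough that all the required intersection patterns can be realized simultaneously. A natural alternative is an induction on $g$: remove a crosscap or a two-sided handle and feed the inductive hypothesis through a Birman-type exact sequence for $N_g$; there the difficulty moves to controlling how the normal closure $\mathcal{Y}$ interacts with the point-pushing and inclusion homomorphisms, which is itself delicate for non-orientable surfaces.
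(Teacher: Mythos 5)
First, a point of reference: this paper does not prove Theorem \ref{thm:level2} at all --- it is quoted from Szepietowski \cite{Szepietowski} --- so your proposal has to stand on its own as a proof of that external result. Your framing is fine as far as it goes: the subgroup $\mathcal{Y}$ generated by all $Y$-homeomorphisms is normal (conjugation formula) and contained in $\Gamma_2(N_g)$, and the theorem is equivalent to injectivity of $\mathcal{M}(N_g)/\mathcal{Y}\to\mathcal{M}(N_g)/\Gamma_2(N_g)$, which one could in principle get by building a section from a presentation of the image of $\mathcal{M}(N_g)$ in $\mathrm{Aut}(H_1(N_g;\mathbb{Z}_2))$ and lifting its relators into $\mathcal{Y}$.

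The genuine gap is that every step in which the difficulty of the theorem is concentrated is asserted rather than carried out. (i) Identifying the quotient with the full isometry group of the mod-$2$ intersection form already needs a citation (McCarthy--Pinkall), but much more seriously you need an explicit presentation of that group on your chosen transvection generators whose relators are only ``commutations, braid relators, involutions and a finite list of low-rank relators''; you neither produce nor cite such a presentation, and its existence in this convenient form is not obvious --- you yourself flag it as the main obstacle, which means the core of the proof is missing. (ii) The claim that commutation and braid relators ``lift verbatim'' presumes a single fixed generating family of curves realizing every mod-$2$ intersection number geometrically minimally and simultaneously; for pairs where this fails the lifted relator is a nontrivial mapping class that must still be shown to lie in $\mathcal{Y}$, and nothing in the proposal addresses this. (iii) The identity you lean on for the involution relators, $Y_{m,a}^2=t_{\partial K}^{\pm1}$, would (even if justified) only show that boundary twists of embedded Klein bottles with a hole lie in $\mathcal{Y}$; it does not give $t_{c}^2\in\mathcal{Y}$ for your twist generators. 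The relation that actually does that job is $Y_{m,a}\,t_a\,Y_{m,a}^{-1}=t_a^{-1}$, whence $t_a^2=Y_{t_a(m),a}\,Y_{m,a}^{-1}$ is a product of two $Y$-homeomorphisms --- exactly the identity used in this paper in the proof of Lemma \ref{lem:product-Y-homeo} --- and even that only covers twists about curves met once by a one-sided curve, leaving the remaining generators and the residual low-rank relators untreated. So what you have is a plausible program rather than a proof; of your two suggested routes, the inductive one via the crosscap-pushing (Birman-type) homomorphism is the more promising, and indeed crosscap-pushing maps are central to Szepietowski's treatment of $\Gamma_2(N_g)$ in the papers cited here, but in your sketch the key control of $\mathcal{Y}$ under that induction is likewise left open.
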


Chillingworth showed that $\mathcal{M}(N_g)$ is finitely generated. 
\begin{figure}[hbtp]
\includegraphics[height=6.5cm]{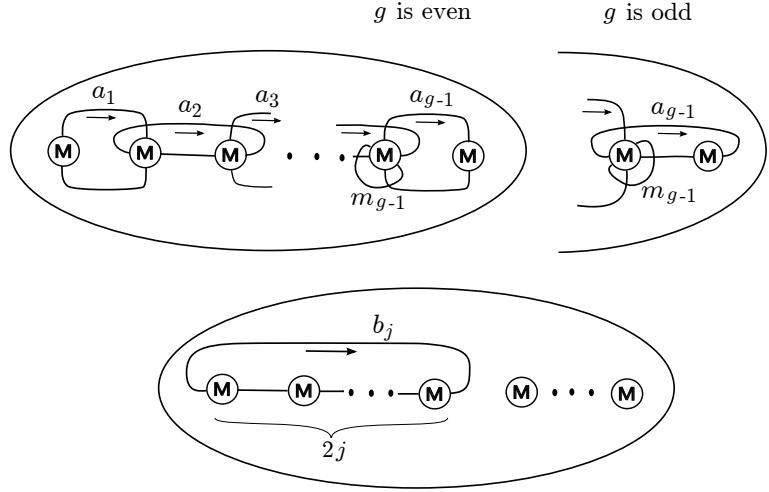}
\caption{Chillingworth's generators for $\mathcal{M}(N_g)$. }
\label{fig:generator}
\end{figure}
\begin{thm}[\cite{Chillingworth}]
\label{thm:Chillingworth}
Let $a_1, \ldots, a_{g-1}$, $b_j$ $(1 \leq j \leq 2/g)$ and $m_{g-1}$ 
be circles shown in Figure \ref{fig:generator}. 
Then $t_{a_1}, \ldots, t_{a_{g-1}}, t_{b_j}\ (1 \leq j \leq 2/g)$, 
$Y_{m_{g-1}, a_{g-1}}$ 
generate $\mathcal{M}(N_g)$. 
\end{thm}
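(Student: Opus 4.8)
The plan is to start from Lickorish's generating set --- all Dehn twists about A-circles together with all $Y$-homeomorphisms (\cite{Lickorish1,Lickorish2}) --- and to whittle it down to the finite list in the statement. Writing $G = \langle t_{a_1}, \ldots, t_{a_{g-1}}, t_{b_j}, Y_{m_{g-1},a_{g-1}} \rangle$ for the subgroup generated by the listed elements, it suffices to prove that $G$ contains every Dehn twist about an A-circle and every $Y$-homeomorphism; Lickorish's theorem then forces $G = \mathcal{M}(N_g)$.

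The engine of the argument is the change-of-coordinates principle: two A-circles lie in the same $\mathcal{M}(N_g)$-orbit exactly when their complements are homeomorphic as subsurfaces of $N_g$, and likewise any two configurations $(m,a)$ consisting of an M-circle meeting an A-circle transversely in one point --- each having a Klein-bottle-with-a-hole as regular neighborhood --- are carried to one another by some diffeomorphism of $N_g$. The key reduction is therefore to realize these connecting diffeomorphisms using only the listed generators. So the first step is to catalogue the finitely many topological types of A-circle on $N_g$ (the non-separating curves, and the several separating types recorded by the induced partition of the cross-caps), treating even and odd $g$ separately, since the curves $b_j$ are positioned differently in the two parities.

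The second, central step is to show that $G$ acts transitively on each such type. The curves $a_1,\ldots,a_{g-1},b_j$ form a chain, and the associated Dehn twists let one slide two-sided handles along the surface; the chain relations then produce Dehn twists about every curve reached in this sliding process, so $G$ attains every A-circle that can be moved while keeping the cross-caps fixed. To relocate the cross-caps themselves --- which the Dehn twists alone cannot do --- one uses $Y_{m_{g-1},a_{g-1}}$, which transports a cross-cap past a neighboring handle; conjugating it by twists already known to lie in $G$ permutes and repositions the cross-caps arbitrarily. Combining the two mechanisms, for any A-circle $c$ I would exhibit an explicit $h \in G$ with $h(c)$ equal to some $a_i$ or $b_j$, and similarly carry any configuration $(m,a)$ to $(m_{g-1},a_{g-1})$.

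With transitivity in hand the conclusion is formal. For an arbitrary A-circle $c$, choosing $h \in G$ with $h(c)=a_i$ gives $t_c = h^{-1}t_{a_i}^{\pm 1}h \in G$, since conjugating a Dehn twist by a diffeomorphism yields the Dehn twist about the image curve, possibly with reversed sign, whose inverse also lies in $G$; hence $G$ contains all Dehn twists about A-circles. For an arbitrary $Y$-homeomorphism $Y_{m,a}$, choosing $h \in G$ carrying $(m,a)$ to $(m_{g-1},a_{g-1})$ gives $Y_{m,a}=h^{-1}Y_{m_{g-1},a_{g-1}}h \in G$. Thus $G$ contains Lickorish's entire generating set and equals $\mathcal{M}(N_g)$. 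The main obstacle is the transitivity step: one must verify, type by type and in both parities of $g$, that the explicitly drawn chain of twists together with the single $Y$-homeomorphism genuinely suffice to move every A-circle and every $Y$-configuration into standard position, which is where all the concrete curve manipulation resides.
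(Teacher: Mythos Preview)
The paper does not prove this statement. Theorem~\ref{thm:Chillingworth} is quoted as a result of Chillingworth \cite{Chillingworth} and is used as a black box; no argument for it appears anywhere in the paper. So there is no ``paper's own proof'' to compare your proposal against.

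As for the proposal on its own merits: the outline is a plausible strategy, but it is only an outline, and you correctly identify that essentially all the work lies in the transitivity step you leave undone. Two cautions. First, your plan starts from Lickorish's \emph{infinite} generating set (all twists and all $Y$-homeomorphisms) and tries to conjugate each into $G$; Chillingworth's actual argument instead starts from Lickorish's earlier \emph{finite} generating set and reduces that, which is considerably less to check. Second, the change-of-coordinates principle only tells you that $\mathcal{M}(N_g)$ acts transitively on each topological type, not that $G$ does; closing that gap is the entire content of the theorem, and appeals to ``chain relations'' and ``sliding handles'' do not by themselves establish it --- you would need explicit curve-by-curve manipulations showing that the specific listed twists and the single $Y$-homeomorphism reach every type, for both parities of $g$. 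Until that is written out, the proposal is a plan rather than a proof.
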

\begin{rem}
If $g=1$, then $\mathcal{M}(N_1)$ is trivial, hence 
Theorem \ref{thm:extendable} is valid. 
From here to the end of this paper, we assume $g \geq 2$. 
\end{rem}
\begin{figure}[hbtp]
\includegraphics[height=6cm]{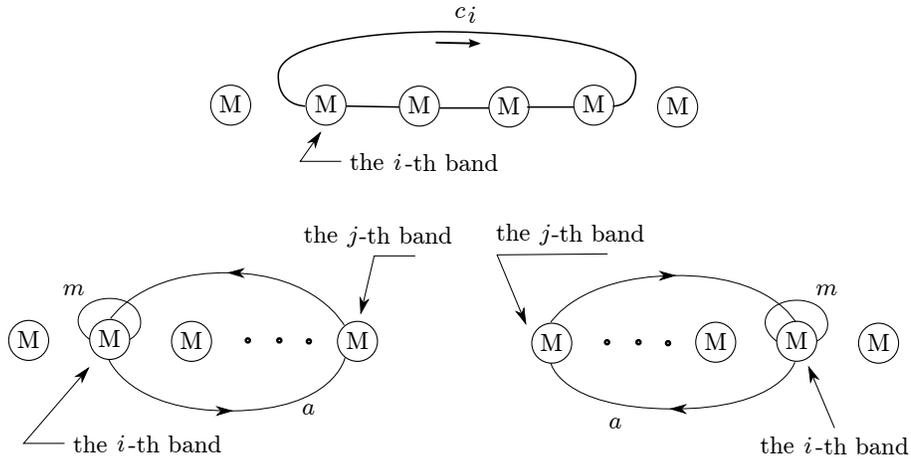}
\caption{
%tmp
%{\bf fig:generator2}
%
The circle $c_i$ and the Y-homeomorphism $Y_{i,j}$.}
\label{fig:generator2}
\end{figure}
{\em The $i$-th band\/} is the M\"{o}bius band on which the circle $x_i$ 
in Figure \ref{fig:o-standard-basis} goes across.  
Let $c_i$ ($ i = 1, \ldots, g-3$) be a simple closed curve shown in 
the top of Figure \ref{fig:generator2}. 
Let $i, j = 1, \ldots, g$ such that $i \not= j$. 
When $i < j$ (resp. $i > j$), we define 
$Y_{i,j} = Y_{m,a}$, where $m$ and $a$ are as shown 
in the left bottom (resp. the right bottom) of Figure \ref{fig:generator2}. 
Let $\mathcal{YS}_g$ be the subgroup of $\mathcal{M}(N_g)$ generated by 
all $Y_{i,j}$. 
\begin{lem}
\label{lem:generator} 
%tmp 
%(lem:generator)
%
$\mathcal{YS}_g$ and 
$t_{a_1}, \ldots, t_{a_{g-1}}$, $t_{c_1}, \ldots, t_{c_{g-3}}$ 
generate $\mathcal{M}(N_g)$.
\end{lem}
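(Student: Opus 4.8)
The plan is to start from Chillingworth's generating set from Theorem~\ref{thm:Chillingworth} and rewrite it in terms of the proposed generators. Chillingworth's set consists of $t_{a_1},\dots,t_{a_{g-1}}$, the twists $t_{b_j}$ (one for each $b_j$ in Figure~\ref{fig:generator}), and the single $Y$-homeomorphism $Y_{m_{g-1},a_{g-1}}$. The $a_i$-twists are already among our generators, so it suffices to show that each $t_{b_j}$ and the distinguished $Y_{m_{g-1},a_{g-1}}$ lie in the subgroup $G$ generated by $\mathcal{YS}_g$, the $t_{a_i}$, and the $t_{c_i}$. Since $Y_{m_{g-1},a_{g-1}}$ is, up to isotopy, conjugate to one of the $Y_{i,j}$ (the circles $m_{g-1}$ and $a_{g-1}$ are exactly the configuration used to define $Y_{g-1,g}$ or $Y_{g,g-1}$), that generator is already in $\mathcal{YS}_g \subset G$; this step should be immediate from the definitions in Figure~\ref{fig:generator2}.

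The substantive work is to express each $t_{b_j}$ in terms of the $t_{a_i}$, $t_{c_i}$, and the $Y_{i,j}$. The idea is to locate the curve $b_j$ relative to the chain $a_1,\dots,a_{g-1}$ and the auxiliary curves $c_1,\dots,c_{g-3}$, and to use standard mapping-class-group relations: the chain relation / lantern-type relations among Dehn twists, the braid relations $t_{a_i}t_{a_{i+1}}t_{a_i} = t_{a_{i+1}}t_{a_i}t_{a_{i+1}}$, and the commutation of twists about disjoint curves. Concretely, I would find for each $j$ an element $h \in G$ with $h(b_j)$ equal to one of the $a_i$ or $c_i$ (or a curve already handled), so that $t_{b_j} = h^{-1} t_{h(b_j)} h \in G$. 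The curves $c_i$ were presumably introduced in Figure~\ref{fig:generator2} precisely so that the union of the $a_i$'s and $c_i$'s forms a rich enough collection of A-circles to connect, via products of twists, to every $b_j$; verifying the precise sequence of twists and $Y$-homeomorphisms that realizes each such $h$ is the main obstacle, and it is essentially a careful picture-chasing argument on the surface $N_g$ using the motion picture of Figure~\ref{fig:o-standard-basis} and the curve placements of Figures~\ref{fig:generator}--\ref{fig:generator2}.

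A secondary point to check is that the $Y$-homeomorphisms $Y_{i,j}$ together with the twists can be used to move M\"obius-band factors around $N_g$: conjugating a $Y_{i,j}$ by an appropriate product of $a$- and $c$-twists should yield any $Y_{m,a}$ we need, in particular those appearing implicitly when rewriting $t_{b_j}$ for the curves $b_j$ that pass near the cross-cap regions. I expect this to follow from the same kind of explicit isotopy arguments, using that the $Y_{i,j}$ already generate $\mathcal{YS}_g$ and that $\Gamma_2(N_g) \le \mathcal{YS}_g$-conjugates is not needed here — only that conjugation by the available twists acts transitively enough on the relevant $(m,a)$ configurations. Once every $t_{b_j}$ and $Y_{m_{g-1},a_{g-1}}$ is shown to lie in $G$, Theorem~\ref{thm:Chillingworth} gives $\mathcal{M}(N_g) = G$, proving the lemma.
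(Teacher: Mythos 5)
Your overall frame agrees with the paper's: start from Chillingworth's generating set (Theorem \ref{thm:Chillingworth}), observe that the $t_{a_i}$ are already among the proposed generators and that $Y_{m_{g-1},a_{g-1}}$ is one of the $Y_{i,j}$, so that only the twists $t_{b_j}$ remain. But the step you defer as ``the main obstacle'' is the entire content of the lemma, and the mechanism you propose for it cannot work in general. Writing $t_{b_j}=h^{-1}t_{h(b_j)}^{\pm1}h$ with $h(b_j)$ equal to some $a_i$ or $c_i$ requires, at the very least, that $b_j$ lie in the $\mathcal{M}(N_g)$-orbit of some $a_i$ or $c_i$. When $g$ is even and $j=g/2$, the curve $b_{g/2}$ passes through all $g$ crosscaps and its complement is orientable (the paper itself records that the curves of type $R([\oplus\ominus\cdots\ominus])$ have orientable complement), whereas every $a_i$ and $c_i$ has non-orientable complement once $g\geq 6$ (for $g=4$ one simply has $b_2=c_1$). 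The same holds for every ``already handled'' curve $b_{j'}$ with $2j'<g$. So no homeomorphism of $N_g$ whatsoever, let alone one in the subgroup you are allowed to use, carries $b_{g/2}$ to a curve whose twist you already possess; $t_{b_{g/2}}$ cannot be a single conjugated generator and must be expressed as a genuine product via a relation among twists.

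Even for $2j<g$, where $b_j$ does have the same topological type as the $a_i$ and $c_i$, you still have to exhibit the conjugating element $h$ \emph{inside} the subgroup generated by $\mathcal{YS}_g$, the $t_{a_i}$ and the $t_{c_i}$; you acknowledge this but do not do it, so the proposal is a plan rather than a proof, and there is a real risk of circularity since such transitivity statements are usually derived after a generating set is known. The paper's proof avoids both issues: $t_{b_2}=t_{c_1}$, and for $j\geq 3$ the lantern relation $t_{e_1}t_{e_2}t_{e_3}=t_p\,t_{a_{2j-3}}t_{a_{2j-1}}t_{b_j}$ (with $e_1=c_1$, $e_2=c_3$, $p=a_1$ for $j=3$, and $e_1=b_{j-1}$, $e_2=c_{2j-3}$, $p=b_{j-2}$ for $j\geq 4$), together with the fact that $e_3$ is the image of $e_1$ under a product of $a$-twists, expresses $t_{b_j}$ by induction on $j$ using only the $t_{a_i}$ and $t_{c_i}$ --- no $Y$-homeomorphisms are needed for this step. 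Any repair of your argument for $b_{g/2}$ would require such a relation anyway, at which point you are reproducing the paper's proof.
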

\begin{proof}
\begin{figure}[hbtp]
\includegraphics[height=6cm]{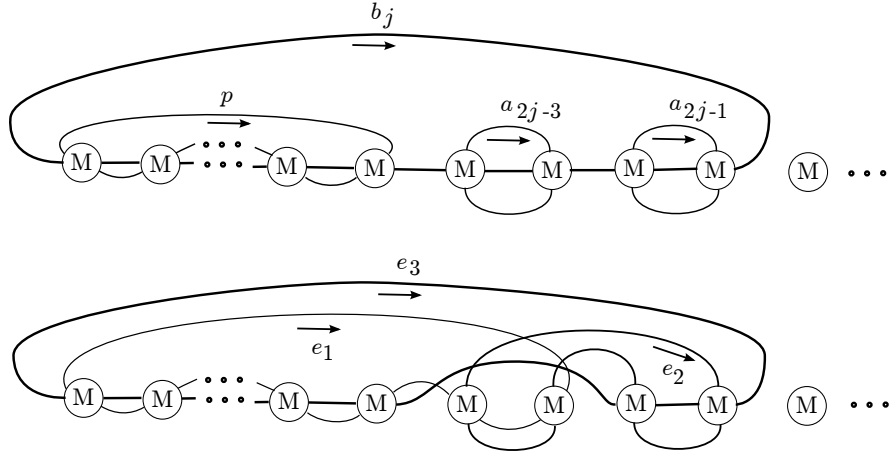}
\caption{
%tmp
%fig:lantern
%
When $j=3$, we drop the 2-nd and 3-rd $M$ with circles. 
}
\label{fig:lantern}
\end{figure}
It suffices to show that $t_{b_j}$ is a product of 
$t_{a_1}, \ldots, t_{a_{g-1}}$, $t_{c_1}, \ldots, t_{c_{g-3}}$. 
When $j=2$, $t_{b_2} = t_{c_1}$. 
When $j\geq 3$, by the lantern relation 
which was discovered by Dehn and rediscovered by Johnson 
\cite{Johnson}, 
$t_{e_1} t_{e_2} t_{e_3} = t_p t_{a_{2j-3}} t_{a_{2j-1}} t_{b_j}$, 
where $p$, $e_1, e_2$ and $e_3$ are circles shown in Figure \ref{fig:lantern}, 
hence 
$t_{b_j} = (t_p t_{a_{2j-3}} t_{a_{2j-1}})^{-1} t_{e_1} t_{e_2} t_{e_3}$. 
Since $e_1 = t_{a_{2j-2}} t_{a_{2j-3}} t_{a_{2j-1}} t_{a_{2j-2}} (e_3)$, 
we see 
$t_{e_3} = (t_{a_{2j-2}} t_{a_{2j-3}} t_{a_{2j-1}} t_{a_{2j-2}})^{-1} 
t_{e_1} t_{a_{2j-2}} t_{a_{2j-3}} t_{a_{2j-1}} t_{a_{2j-2}}$. 
If $j=3$, then $e_1 = c_1$, $e_2 = c_3$ and $p = a_1$. 
Therefore, $t_{b_3}$ is a product of 
$t_{a_1}, \ldots, t_{a_{g-1}}$, $t_{c_1}, \ldots, t_{c_{g-3}}$. 
If $j \geq 4$, then $e_1 = b_{j-1}$, $e_2 = c_{2j-3}$, and $p = b_{j-2}$. 
By the induction on $j$, we see that 
$t_{b_j}$ is a product of 
$t_{a_1}, \ldots, t_{a_{g-1}}$, $t_{c_1}, \ldots, t_{c_{g-3}}$. 
\end{proof}
\begin{rem}
If $g=2,3$, then this lemma reads $\mathcal{YS}_g$ and 
$t_{a_1}, \ldots, t_{a_{g-1}}$ generate $\mathcal{M}(N_g)$.   
\end{rem}
\section{Generators for subgroup of $\mathcal{M}(N_g)$ preserving 
$q_{os}$
}
In this section, we find a finite system of generators for  
$$
\mathcal{N}_g (q_{os})= 
\left\{ \phi \in \mathcal{M}(N_g) \, \left| \, 
q_{os} ( \phi_* (x)) = q_{os} (x) 
\text{ for every } x \in H_1(N_g;\mathbb{Z}_2)
\right.
\right\} 
$$
and prove the main theorem (Theorem \ref{thm:extendable}) of 
this paper. \par
We introduce a group 
$$
\mathcal{O}_g (q_{os})= 
\left\{ A \in Aut(H_1(N_g; \mathbb{Z}_2)) \, | \, 
q_{os} ( A (x)) = q_{os} (x) 
\text{ for every } x \in H_1(N_g;\mathbb{Z}_2)
\right\}. 
$$
Then we have a natural short exact sequence 
%tmp
%{\bf
%(eq:short-exact)
%}
%
\begin{equation}\label{eq:short-exact}
0 \to \Gamma_2(N_g) \to \mathcal{N}_g (q_{os}) 
\to \mathcal{O}_g (q_{os}) \to 0. 
\end{equation}
Since $\Gamma_2(N_g)$ is a finite index subgroup of $\mathcal{M}(N_g)$ and 
$\mathcal{O}_g (q_{os})$ is a finite group, 
there exists a finite system of generators for $\mathcal{N}_g (q_{os})$. 
We find a system of generators explicitly. 
\begin{thm}\label{thm:generator-pin}
%tmp
%(thm:generator-pin)
%
\begin{figure}[hbtp]
\includegraphics[height=3cm]{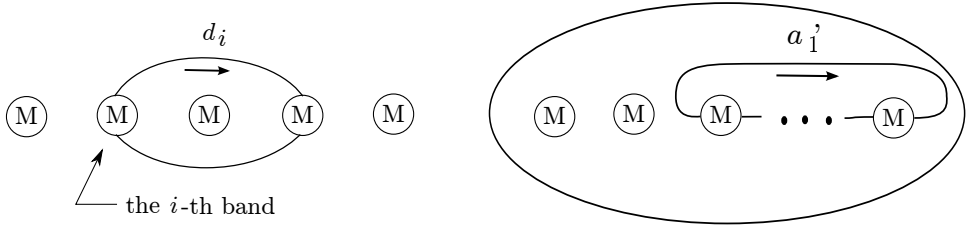}
\caption{
%tmp
%{\bf fig:generator3}
%
The circle $d_i$. 
}
\label{fig:generator3}
\end{figure}
$\mathcal{N}_g (q_{os})$ is generated by 
$\mathcal{YS}_g$, 
$t_{a_1}^2$, $\ldots$, $t_{a_{g-1}}^2$, 
$t_{c_1}^2$, $\ldots$, $t_{c_{g-3}}^2$, 
$t_{d_1}$, $\ldots$, $t_{d_{g-2}}$, 
and $t_{a_1} t_{a_3} t_{c_1}$, $\ldots$, 
$t_{a_{g-3}} t_{a_{g-1}} t_{c_{g-3}}$, 
where $d_i$ is illustrated 
in Figure \ref{fig:generator3}.  
\end{thm}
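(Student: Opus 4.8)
The plan is to exploit the short exact sequence \eqref{eq:short-exact}. Since $\Gamma_2(N_g)$ is generated by $Y$-homeomorphisms (Theorem \ref{thm:level2}) and I expect to show (presumably in a lemma elsewhere in the paper) that $\Gamma_2(N_g)$ is already contained in the subgroup generated by $\mathcal{YS}_g$, it suffices to produce lifts to $\mathcal{N}_g(q_{os})$ of a generating set of the finite group $\mathcal{O}_g(q_{os})$. Concretely, if $g_1, \ldots, g_k$ generate $\mathcal{O}_g(q_{os})$ and $\widetilde{g}_1, \ldots, \widetilde{g}_k \in \mathcal{N}_g(q_{os})$ are chosen mapping classes with $(\widetilde{g}_i)_* = g_i$, then $\mathcal{N}_g(q_{os})$ is generated by $\Gamma_2(N_g)$ together with $\widetilde{g}_1, \ldots, \widetilde{g}_k$; so the whole problem reduces to (a) finding explicit generators for $\mathcal{O}_g(q_{os})$, and (b) recognizing each as the image of one of the listed mapping classes.

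For step (a), I would work with the basis $\{x_1, \ldots, x_g\}$ of $H_1(N_g;\mathbb{Z}_2)$ from Figure \ref{fig:o-standard-basis}, on which $q_{os}(x_{2i-1}) = +1$ and $q_{os}(x_{2i}) = -1$. The associated $\mathbb{Z}_2$-valued symmetric bilinear form $(x\cdot y)_2$ and the quadratic refinement $q_{os}$ are standard objects, and $\mathcal{O}_g(q_{os})$ is the orthogonal group of this $\mathbb{Z}_4$-quadratic form over $\mathbb{Z}_2$; its structure and a small generating set (transvections-type elements together with a few "swap" automorphisms permuting the pairs) are classical. I would then identify the homological action of each candidate generator: $Y_{i,j}$ acts as the transvection-like map fixing all basis elements except swapping or adding within the $\{x_i, x_j\}$ block; $t_{a_i}^2$, $t_{c_i}^2$ act trivially on $H_1(\cdot;\mathbb{Z}_2)$ (hence lie in $\Gamma_2$ and are there only for the normal-generation bookkeeping in later sections, or to match the generating set of $\mathcal{M}(N_g)$ from Lemma \ref{lem:generator}); $t_{d_i}$ and $t_{a_{2i-1}}t_{a_{2i+1}}t_{c_i}$ realize the remaining generators of $\mathcal{O}_g(q_{os})$ — in particular the elements that permute consecutive pairs of bands or mix an $x_{2i-1}$ with an $x_{2i}$ while preserving the values $+1$ and $-1$. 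Checking that each listed mapping class does preserve $q_{os}$ is a finite computation using the defining curves and the formula $q_{os}(x+y) = q_{os}(x)+q_{os}(y)+2\times(x\cdot y)_2$.

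The main obstacle is step (b): proving that the homological images of the listed elements actually generate all of $\mathcal{O}_g(q_{os})$, not just a proper subgroup. This requires a genuine structural understanding of the finite orthogonal group $\mathcal{O}_g(q_{os})$ — one must know, or prove, a presentation or at least a minimal generating set, and then verify that the explicit matrices coming from $\mathcal{YS}_g$, the $t_{d_i}$, and the $t_{a_{2i-1}}t_{a_{2i+1}}t_{c_i}$ hit a full set. I would handle this by an induction on $g$: show that the subgroup generated by the images contains the subgroup fixing $x_g$ (or the last band) by the inductive hypothesis, then show that the extra generators $Y_{g-1,g}$, $Y_{g,g-1}$ and $t_{d_{g-2}}$ (together with $t_{a_{g-3}}t_{a_{g-1}}t_{c_{g-3}}$ when $g$ is large enough) suffice to move the last band into a "generic" position, giving transitivity on the relevant set of vectors/pairs; a transitivity-plus-stabilizer argument then closes the induction. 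Care is needed at the low-genus base cases $g=2,3$ where the $c_i$ and $d_i$ lists are shorter, and these should be checked by hand against the explicit $\mathcal{O}_g(q_{os})$.

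Finally, with the generating set of $\mathcal{N}_g(q_{os})$ in hand, the main theorem (Theorem \ref{thm:extendable}) follows by showing each listed generator is $os$-extendable: the $Y_{i,j}$ extend because they are supported near the bands and one can see the extension directly in the motion picture of Figure \ref{fig:o-standard-basis}; the squares $t_{a_i}^2$, $t_{c_i}^2$ extend because a Dehn twist squared about an A-circle bounding in the handlebody side is realized by spinning a disk; and $t_{d_i}$ and $t_{a_{2i-1}}t_{a_{2i+1}}t_{c_i}$ are exhibited as extendable by explicit isotopies of $S^4$. Combined with the already-noted fact that $os$-extendable implies $q_{os}$-preserving, this gives $\mathcal{N}_g(q_{os}) = \{\text{$os$-extendable classes}\}$, which is exactly Theorem \ref{thm:extendable}.
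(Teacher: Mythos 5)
Your overall skeleton (use the exact sequence $0 \to \Gamma_2(N_g) \to \mathcal{N}_g(q_{os}) \to \mathcal{O}_g(q_{os}) \to 0$, handle $\Gamma_2(N_g)$ separately, then lift a generating set of $\mathcal{O}_g(q_{os})$) is the same as the paper's, but both of the places where you defer the work are exactly where the real content of the proof lies, and one of them is deferred to a statement that the paper does not prove and that you have no argument for. You assume that $\Gamma_2(N_g)$ is contained in the subgroup generated by $\mathcal{YS}_g$ alone, ``presumably shown in a lemma elsewhere.'' What the paper actually proves is the weaker statement $\Gamma_2(N_g) \subset G_g$, where $G_g$ is generated by $\mathcal{YS}_g$ \emph{together with} $t_{a_i}^2$, $t_{c_i}^2$, $t_{d_i}$ and $t_{a_i}t_{a_{i+2}}t_{c_i}$, and this is the bulk of Section 4: Theorem \ref{thm:level2} only says $\Gamma_2(N_g)$ is generated by all $Y$-homeomorphisms, whose legs can be arbitrary M-circles (e.g.\ Szepietowski's generators $Y_{\alpha_{\{i,j,k\}},\alpha_{\{i,j,k,l\}}}$ of Theorem \ref{thm:gamma2-gen} have legs representing three-term homology classes), and reducing these to the specific $Y_{i,j}$ requires the crosscap-pushing argument (Lemma \ref{lem:short-leg-gen}) plus the $G_g$-equivalence machinery (the r-circle normal forms of Lemma \ref{lem:G-g-eq-r-circle}, Lemma \ref{lem:product-Y-homeo}, or alternatively Lemma \ref{lem:gamma2-short}), in which conjugation by $t_{d_i}$, $t_{a_i}^2$ and $t_{a_i}t_{a_{i+2}}t_{c_i}$ is used in an essential way. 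So the statement you lean on is both unproved and stronger than what is available; since $\mathcal{YS}_g \subset \Gamma_2(N_g)$ is clear, asserting equality (or containment of $\Gamma_2$ in $\langle\mathcal{YS}_g\rangle$) without argument is a genuine gap, not bookkeeping.

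The second half of your plan, generating $\mathcal{O}_g(q_{os})$, is also only a sketch. Calling the orthogonal group of the $\mathbb{Z}_4$-valued form ``classical'' does not by itself produce a generating set matching the homological images of $t_{d_i}$ and $t_{a_i}t_{a_{i+2}}t_{c_i}$; the paper gets this by quoting Nowik's Theorem \ref{thm:generator-Og} (generation by $T_a$ with $q_{os}(a)=2$ and by triples $T_aT_bT_{a+b}$ with $q_{os}$ vanishing on $a,b,a+b$) and then carrying out the explicit combinatorial reduction of Lemma \ref{lem:gen-Og-os-red} to the elements $T_{x_i+x_{i+2}}$ and $T_{x_i+x_{i+1}}T_{x_{i+2}+x_{i+3}}T_{x_i+x_{i+1}+x_{i+2}+x_{i+3}}$. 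Your proposed transitivity-plus-stabilizer induction could in principle replace this, but as written it is a plan rather than a proof, and you correctly identify it as the main obstacle without resolving it; you would also need the low-genus cases and the even/odd length bookkeeping (the cases $2n=g$ versus $2n\neq g$ in the paper's lemma) done explicitly. Your remarks on extendability of the generators concern Theorem \ref{thm:extendable}, not the statement at hand, and are consistent with the paper.
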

\begin{rem}
If $g=2$, this theorem reads $\mathcal{N}_2(q_{os})$ is generated by 
$\mathcal{YS}_2$ and $t_{a_1}^2 = id_{N_2}$. 
If $g=3$, this theorem reads $\mathcal{N}_3(q_{os})$ is generated by 
$\mathcal{YS}_3$, $t_{a_1}^2, t_{a_2}^2$ and $t_{d_1}$.  
\end{rem}
{\it Proof of Theorem \ref{thm:extendable}.} 
By the definition of $q_{os}$, if a diffeomorphism $\phi$ over $N_g$ is $os$-extendable 
then $\phi$ preserves $q_{os}$. \par
\begin{figure}[hbtp]
\includegraphics[height=2.5cm]{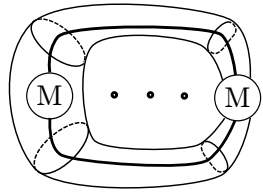}
\caption{
Sliding the left M\"obius band along this tube is an extension of $Y_{i,j}$.}
\label{fig:Y-tube}
\end{figure}
Conversely, we assume that $\phi$ preserves $q_{os}$. 
Then $\phi$ is an element of $\mathcal{N}_g(q_{os})$. 
Therefore, if each generator of $\mathcal{N}_g(q_{os})$ is $os$-extendable then 
$\phi$ is $os$-extendable. 
Since a sliding of a M\"obius band along 
the tube illustrated in Figure \ref{fig:Y-tube} is an extension of $Y_{i, j}$, 
$Y_{i,j}$ is $os$-extendable, hence every element of $\mathcal{YS}_g$ is 
$os$-extendable. 
Since the regular neighborhoods of $a_i$ and $c_i$ are annuli 
trivially embedded in the equator $S^3$ of $S^4$, 
$t_{a_i}^2$, $t_{c_i}^2$ are $os$-extendable by 
the same argument as in the introduction of \cite{Hirose2}. 
Since the regular neighborhoods of $d_i$ is a Hopf band embedded 
in the equator $S^3$ of $S^4$, $t_{d_i}$ are $os$-extendable by 
the same argument as the proof of \cite[Proposition 2.1]{Hirose2}. 
Finally, by using the same argument as showing the extendability of 
``$C_1 C_3 C_5$" in the proof of \cite[Lemma 2.2]{Hirose}, 
we show that $t_{a_i} t_{a_{i+2}} t_{c_i}$ is $os$-extendable. 
\qed \par
For $a \in H_1(N_g; \mathbb{Z}_2)$, 
we define {\em the transvection\/} 
$T_a : H_1(N_g;\mathbb{Z}_2) \to H_1(N_g;\mathbb{Z}_2)$ 
{\em about $a$\/} by $T_a(x) = x + (x \cdot a)_2 \ a,$ where $(\ \cdot \ )_2$ 
means the mod-$2$ intersection form. 
We remark that if $l$ is a simple closed curve on $N_g$ such that 
$[l] = a \in H_1(N_g; \mathbb{Z}_2)$, then $(t_l)_* = T_a$. 
Since ${T_a}^2 = id_{H_1(N_g; \mathbb{Z}_2)}$, ${t_l}^2 \in \mathcal{N}_g (q_{os})$ for 
every simple closed curve $l$ on $N_g$. 
For $a \in H_1(N_g; \mathbb{Z}_2)$ with $q_{os}(a) = 1$, 
$T_a$ preserves $q_{os}$, hence $t_{d_i} \in \mathcal{N}_g (q_{os})$. 
For $a, b \in H_1(N_g; \mathbb{Z}_2)$ with $q_{os}(a) = q_{os}(b) = q_{os}(a+b) =0$, 
$T_a T_b T_{a+b}$ preserves $q_{os}$, hence 
$t_{a_i} t_{a_{i+2}} t_{c_i}$ are elements of $\mathcal{N}_g (q_{os})$. 
Since $(Y_{i,j})_* = id_{H_1(N_g; \mathbb{Z}_2)}$, 
$\mathcal{YS}_g \subset \mathcal{N}_g(q_{os})$.  
Therefore, in order to prove Theorem \ref{thm:generator-pin}, 
we should see that every element of $\mathcal{N}_g (q_{os})$ 
is a product of these elements. 
\subsection{Short-leg Y-homeomorphisms}
For a Y-homeomorphism $Y_{m,a}$, we call $m$ {\em the leg\/} of $Y_{m,a}$ 
and $a$ {\em the arm\/} of $Y_{m,a}$. 
A Y-homeomorphism is called a {\em short-leg\/} Y-homeomorphism, 
if its leg is one of $x_1$, $\ldots$, $x_g$ illustrated in 
Figure \ref{fig:o-standard-basis}. 
\begin{lem}\label{lem:short-leg-gen}
%tmp
%(lem:short-leg-gen)
%
Every short-leg Y-homeomorphism is an element of $\mathcal{YS}_g$. 
\end{lem}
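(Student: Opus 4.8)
The plan is to show that an arbitrary short-leg $Y$-homeomorphism $Y_{x_k,a}$, whose leg is the $M$-circle $x_k$ and whose arm $a$ is an arbitrary oriented A-circle meeting $x_k$ transversely in one point, can be written as a product of the standard generators $Y_{i,j}$ of $\mathcal{YS}_g$ together with conjugating elements that themselves lie in $\mathcal{YS}_g$. The key structural fact I would use is that $\mathcal{YS}_g$ is a \emph{normal} subgroup of $\mathcal{M}(N_g)$: for any $\phi \in \mathcal{M}(N_g)$ and any $Y$-homeomorphism $Y_{m,b}$ one has $\phi \, Y_{m,b} \, \phi^{-1} = Y_{\phi(m),\phi(b)}$ up to sign/orientation conventions, so conjugation permutes $Y$-homeomorphisms; moreover $\mathcal{YS}_g \subseteq \Gamma_2(N_g)$, and by Theorem~\ref{thm:level2} $\Gamma_2(N_g)$ is generated by \emph{all} $Y$-homeomorphisms, so to prove the lemma it is enough to realize each standard $Y_{i,j}$-conjugate of a short-leg $Y$-homeomorphism inside $\mathcal{YS}_g$. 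Concretely, I would fix $k$ and show that the set of oriented A-circles $a$ (meeting $x_k$ once) for which $Y_{x_k,a} \in \mathcal{YS}_g$ is closed under the relevant moves and contains the arms appearing in the definition of the $Y_{i,j}$.

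The main steps, in order, would be: (1) record the conjugation formula $t_c \, Y_{m,a} \, t_c^{-1} = Y_{m, t_c(a)}$ and $Y_{m',b'} \, Y_{m,a} \, Y_{m',b'}^{-1} = Y_{m, Y_{m',b'}(a)}$ (again up to the sign conventions for $Y$-homeomorphisms, where a sign change corresponds to replacing the arm by a band-sum with the leg, which does not change the homeomorphism); (2) observe that by definition each $Y_{i,j}$ is a short-leg $Y$-homeomorphism with leg $x_i$ (for $i<j$) or $x_j$ (for $i>j$) and a specific arm $a_{i,j}$ drawn in Figure~\ref{fig:generator2}; (3) show that for a fixed leg $x_k$, the arms $a_{k,j}$ and $a_{j,k}$ (as $j$ ranges over the admissible indices) together with the curves $t_{a_i}, t_{c_i}$-images of them generate, under the transvections $T_{[a]}$ on $H_1(N_g;\mathbb{Z}_2)$, all mod-$2$ homology classes that can serve as the class of an arm through $x_k$; (4) use that a $Y$-homeomorphism depends (up to isotopy, and certainly up to the subgroup generated by $Y$-homeomorphisms) only on the isotopy class of $m\cup a$, and that any two A-circles meeting $x_k$ once which are homologous can be connected by a sequence of such conjugations, to conclude $Y_{x_k,a}\in\mathcal{YS}_g$ for every admissible $a$.

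I expect the main obstacle to be step~(4): controlling the dependence of $Y_{m,a}$ on the isotopy class of the arm $a$ rather than just its homology class. Two arms through $x_k$ with the same $\mathbb{Z}_2$-homology class need not be isotopic, and $Y_{x_k,a}$ genuinely depends on more than $[a]$; so I cannot simply quote that transvections on $H_1$ match up. The way around this is to work at the level of the Klein-bottle-with-hole neighborhood $K$ of $x_k\cup a$: changing the arm by an isotopy of $N_g$ that is itself expressible in the generators, or by a "finger move" of the arm across another band (which can be effected by a product of Dehn twists $t_{a_i}$, $t_{c_i}$ and other short-leg $Y$'s already known to lie in $\mathcal{YS}_g$), changes $Y_{x_k,a}$ only by pre/post-composition with elements of $\mathcal{YS}_g$. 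One then needs the (classical, Lickorish/Szepietowski-type) fact that any A-circle meeting a fixed M-circle $x_k$ transversely once can be carried to one of the model arms $a_{k,j}$ by a product of such moves; I would prove this by a change-of-coordinates argument on the subsurface $N_g \setminus x_k$, which is an orientable surface with boundary, where the mapping class group acts transitively on appropriate arcs, and then reassemble. Once step~(4) is in hand, the lemma follows by finitely many conjugations.
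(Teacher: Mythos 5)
There is a genuine gap, and it is exactly at the point you flag as the ``main obstacle'': step (4) is never actually carried out, and the mechanism you propose for it cannot close the argument. You want to move an arbitrary arm $a$ through $x_k$ to a model arm by conjugating with Dehn twists $t_{a_i}$, $t_{c_i}$ and by a change of coordinates on the complement of $x_k$; but these conjugating elements do not lie in $\mathcal{YS}_g$, so at best you would conclude that $Y_{x_k,a}$ lies in the subgroup generated by $\mathcal{YS}_g$ together with those twists --- not in $\mathcal{YS}_g$ itself, which is what the lemma asserts. The opening ``key structural fact'' that $\mathcal{YS}_g$ is normal in $\mathcal{M}(N_g)$ is likewise unjustified: the conjugation formula $\phi Y_{m,b}\phi^{-1}=Y_{\phi(m),\phi(b)}$ shows that the set of \emph{all} Y-homeomorphisms (hence $\Gamma_2(N_g)$, by Theorem~\ref{thm:level2}) is conjugation-invariant, but it gives no reason why a conjugate of one of the particular generators $Y_{i,j}$ should again be a product of such generators; the paper neither claims nor uses normality of $\mathcal{YS}_g$. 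A smaller but real error: the complement of $x_k$ is not an orientable surface with boundary; cutting $N_g$ along the one-sided curve $x_k$ yields $N_{g-1}$ minus an open disk, which is non-orientable for every $g\geq 2$, so the transitivity argument you sketch there does not apply as stated. Finally, reducing to homology classes of arms (step (3)) cannot suffice, as you yourself note, since $Y_{x_k,a}$ depends on more than $[a]\in H_1(N_g;\mathbb{Z}_2)$.

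The missing idea, which is how the paper proves the lemma, is Szepietowski's crosscap pushing map. Regard the $k$-th M\"obius band as a crosscap glued into $N_{g-1}$ at a marked point $p_0$; then there is a homomorphism $\psi=\varphi\circ j:\pi_1(N_{g-1},p_0)\to\mathcal{M}(N_g)$ (a non-orientable analogue of the Birman point-pushing map) which sends the class of a loop $\gamma$ to the result of pushing the crosscap once along $\gamma$. A short-leg Y-homeomorphism $Y_{x_k,a}$ is precisely $\psi$ of the arm $a$ viewed as a based loop in $N_{g-1}$, and $\psi(l_{i,j})=Y_{i,j}$ for the standard generating loops $l_{i,j}$ of $\pi_1(N_{g-1},p_0)$ (Figure~\ref{fig:fund-gen}). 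Because $\psi$ is a homomorphism defined on $\pi_1$, only the based homotopy class of the arm matters --- this is what dissolves your isotopy-versus-homology difficulty --- and writing $[a]$ as a word in the $l_{i,j}$ immediately exhibits $Y_{x_k,a}$ as a product of $Y_{i,j}^{\pm 1}$, hence an element of $\mathcal{YS}_g$, with no conjugation by Dehn twists needed. Without this (or some equivalent homomorphism controlling the dependence on the arm), your outline does not yield the lemma.
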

\begin{proof}
\begin{figure}[hbtp]
\includegraphics[height=2cm]{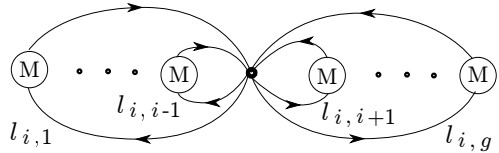}
\caption{
%tmp
%{\bf fig:fund-gen}
%
The generators for $\pi_1(N_g, p_0)$. 
}
\label{fig:fund-gen}
\end{figure}
We review the {\it crosscap pushing map\/} defined in \cite{Szepietowski2}. 
Fix $p_0 \in N_{g-1}$ and define $\mathcal{M}(N_{g-1}, p_0)$ be the group 
of isotopy classes of diffeomorphisms over $N_{g-1}$ preserving $p_0$. 
Let $U$ be a $2$-disk embedded in $N_{g-1}$ such that the center of $U$ is $p_0$. 
We define a homomorphism $j$ from $\pi_1 (N_{g-1}, p_0)$ to 
$\mathcal{M}(N_{g-1}, p_0)$ such that, for a loop $\gamma$ in $N_g$ 
based at $x_0$ and an element $[\gamma] \in \pi_1 (N_{g-1}, x_0)$, 
$j([\gamma])$ is a diffeomorphism over $N_g$ obtained as the effect of 
pushing $p_0$ once along $\gamma$. 
This homomorphism $j$ is in a non-orientable analogy of 
the Birman exact sequence \cite{Birman}. 
We define a homomorphism $\varphi$ from $\mathcal{M}(N_{g-1},p_0)$ 
to $\mathcal{M}(N_g)$ as follows. 
We represent $h \in \mathcal{M}(N_{g-1},p_0)$ by a diffeomorphism $h$ 
over $N_g$ such that $h(U) = U$ and $h|_U = id_U$. 
We construct $N_g$ from $N_{g-1} - int\, U$ by attaching a M\"obius band along 
$\partial U$. Here we assume that this M\"obius band is 
the $i$-th band on $N_g$. 
We extend $h |_{N- int\, U}$ to a diffeomorphism $\varphi(h)$ over $N_g$ 
constructed as above by the identity on the M\"obius band. 
The homomorphism $\psi = \varphi \circ j$ 
is called a crosscap pushing map. 

Every short-leg Y-homeomorphism $Y_{x_i, a}$ 
is in $\psi(\pi_1 (N_{g-1}, p_0))$, $\pi_1(N_{g-1}, p_0)$ 
is generated by the loops $l_{i,j}$'s indicated in Figure \ref{fig:fund-gen}, 
and $\psi(l_{i,j}) = Y_{i,j}$, hence $Y_{x_i, a}$ is a product of $Y_{i,j}$'s. 
\end{proof}
Let $G_g$ be the subgroup of $\mathcal{M}(N_g)$ generated by 
$\mathcal{YS}_g$, 
$t_{a_1}^2$, $\ldots$, $t_{a_{g-1}}^2$, 
$t_{c_1}^2$, $\ldots$, $t_{c_{g-3}}^2$, 
$t_{d_1}$, $\ldots$, $t_{d_{g-2}}$, 
$t_{a_1} t_{a_3} t_{c_1}$, $\ldots$, 
$t_{a_{g-3}} t_{a_{g-1}} t_{c_{g-3}}$.  
We have already shown that $G_g \subset \mathcal{N}_g(q_{os})$, 
therefore, what we should show is $\mathcal{N}_g(q_{os}) \subset G_g$. 
Two Y-homeomorphisms $Y_1$ and $Y_2$ are {\em $G_g$-equivalent\/} 
if there is an element $\phi$ of $G_g$ such that 
$\phi Y_1 \phi^{-1} = Y_2$. 
We remark that if $Y_1 = Y_{m,a}$ and $Y_2 = \phi Y_1 \phi^{-1}$ 
then $Y_2 = Y_{\phi(m), \phi(a)}$. 
We will show: 
\begin{lem}\label{lem:all-Y-hom-short}
%tmp
%(lem:all-Y-hom-short)
%
Every Y-homeomorphism is a product of Y-homeomorphisms which are 
$G_g$-equivalent to short-leg Y-homeomorphisms. 
\end{lem}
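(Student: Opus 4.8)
The plan is to reduce an arbitrary $Y$-homeomorphism $Y_{m,a}$ to short-leg ones by controlling the leg $m$, which is an M-circle. Since $N_{g-1}$ has genus $g-1\geq 1$, any M-circle $m$ on $N_g$ together with a sufficient system of A-circles can be carried by a diffeomorphism of $N_g$ to one of the distinguished M-circles $x_i$; but for our purposes we must do this using only elements of $G_g$, so the real content is a \emph{change-of-coordinates} principle modulo $G_g$. Concretely, I would first record the elementary identity: for any diffeomorphism $\phi$ one has $\phi Y_{m,a}\phi^{-1}=Y_{\phi(m),\phi(a)}$, so it suffices to show that for every M-circle $m$ there is $\phi\in G_g$ with $\phi(m)$ isotopic to some $x_i$. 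Then $Y_{m,a}=\phi^{-1}Y_{x_i,\phi(a)}\phi$ is conjugate by an element of $G_g$ to the short-leg $Y$-homeomorphism $Y_{x_i,\phi(a)}$, hence in particular is $G_g$-equivalent to a short-leg one, and we are done — note the lemma only asks that each $Y$-homeomorphism be a \emph{product} of $Y$-homeomorphisms each $G_g$-equivalent to a short-leg one, so even a partial reduction that splits $Y_{m,a}$ into several pieces, each with a leg we can move, would suffice.

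The key steps, in order, are: (1) classify M-circles on $N_g$ up to the action of $G_g$; (2) for each class, exhibit an explicit $\phi\in G_g$ moving a representative to some $x_i$; (3) assemble via the conjugation identity. For step (1), the mod-$2$ homology class $[m]\in H_1(N_g;\mathbb{Z}_2)$ of an M-circle satisfies $q_{os}([m])\in\{+1,-1\}$ (an M-circle has odd self-intersection data), and the subgroup $\mathcal{YS}_g\subset G_g$ acts trivially on homology while the transvections $t_{d_i}$, the products $t_{a_i}t_{a_{i+2}}t_{c_i}$, and the squares act through $\mathcal{O}_g(q_{os})$; one checks that the image of $G_g$ in $\mathcal{O}_g(q_{os})$ is large enough to move any class with $q_{os}$-value $+1$ to $[x_1]$ and any with value $-1$ to $[x_2]$ (this is a finite linear-algebra verification over $\mathbb{Z}_2$ using the explicit values $q_{os}(x_{2i-1})=+1$, $q_{os}(x_{2i})=-1$). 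For step (2), once $[m]=[x_i]$ in $\mathbb{Z}_2$-homology, one invokes Theorem~\ref{thm:level2}: the mapping class taking a neighborhood configuration of $m$ to that of $x_i$ differs from a genuine diffeomorphism realizing the homology identity by an element of $\Gamma_2(N_g)$, which is generated by $Y$-homeomorphisms; but we must check these correcting $Y$-homeomorphisms lie in $\mathcal{YS}_g$, which follows from Lemma~\ref{lem:short-leg-gen} together with the reduction already in progress — so this step should be organized as an induction (on genus, or on the ``complexity'' of $m$) rather than a circular appeal.

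The main obstacle I expect is precisely this potential circularity in step (2): moving the leg $m$ to a standard position may itself require $Y$-homeomorphisms with non-standard legs, so the argument must be set up so that the complexity strictly decreases — for instance, by first using Dehn twists $t_{a_i}^2,t_{c_i}^2,t_{d_i}$ and the triples $t_{a_i}t_{a_{i+2}}t_{c_i}$ (all in $G_g$) to fix the homology class, and only then using an explicit geometric isotopy supported in a subsurface of smaller genus to straighten $m$, reducing to Lemma~\ref{lem:short-leg-gen} on that subsurface. A secondary technical point is that the arm $a$ of $Y_{m,a}$ is only well-defined up to the choices in the definition of a $Y$-homeomorphism, and different arms give $Y$-homeomorphisms differing by elements of $\mathcal{YS}_g$ (via the crosscap pushing map relations of Lemma~\ref{lem:short-leg-gen}); I would isolate this as a sub-claim so that, once the leg is standardized, any remaining discrepancy in the arm is absorbed into $G_g$. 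Assembling these, every $Y$-homeomorphism is written as a product of $G_g$-conjugates of short-leg $Y$-homeomorphisms, proving Lemma~\ref{lem:all-Y-hom-short}.
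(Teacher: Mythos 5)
Your plan hinges on the claim that for every M-circle $m$ occurring as a leg there is $\phi\in G_g$ with $\phi(m)$ isotopic to some $x_i$, and your route to such a $\phi$ is: match the mod-$2$ homology class $[m]$ with $[x_i]$ using the image of $G_g$ in $\mathcal{O}_g(q_{os})$, then correct by an element of $\Gamma_2(N_g)$, which is generated by Y-homeomorphisms by Theorem \ref{thm:level2}. This is exactly where the argument breaks: to conclude that the correcting element can be taken in $G_g$ you need (some form of) $\Gamma_2(N_g)\subset G_g$, and in the paper that inclusion is Corollary \ref{cor:level-2-in-Gg}, which is deduced \emph{from} Lemma \ref{lem:all-Y-hom-short} together with Lemma \ref{lem:short-leg-gen}. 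You notice the circularity yourself, but the proposed repair (an induction on genus or on an unspecified complexity of $m$, plus ``an explicit geometric isotopy supported in a subsurface'') is not carried out and is not obviously available: an isotopy does not change the isotopy class of $m$; a diffeomorphism supported in a subsurface must still be shown to lie in $G_g$; and agreement of mod-$2$ homology classes by itself produces no diffeomorphism at all, let alone one in $G_g$. Note also that your target statement (every Y-homeomorphism is itself $G_g$-equivalent to a short-leg one) is stronger than the lemma, which deliberately asserts only a product decomposition.

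What the paper actually supplies, and what your sketch is missing, is a genuine decomposition step. In the original proof, Lemma \ref{lem:product-Y-homeo} shows, by writing an arbitrary mapping class in the generators of Lemma \ref{lem:generator} and checking case by case how $t_{a_i}^{\pm1}$ and $t_{c_i}^{\pm1}$ act on r-circles modulo $\mathcal{YS}_g$ (together with identities such as $t_{a_i}^2=Y_{i+1,i}Y_{i,i+1}^{-1}$), that every Y-homeomorphism is a product of Y-homeomorphisms whose legs are r-circles; only then does the explicit classification of r-circles up to $G_g$-equivalence (Lemma \ref{lem:G-g-eq-r-circle} and Corollary \ref{cor:G-g-eq-r-circle}) move those legs to $x_1$ or $x_2$ by concrete elements of $G_g$. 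In the alternative proof the decomposition is imported from Szepietowski's finite generating set for $\Gamma_2(N_g)$ (Theorem \ref{thm:gamma2-gen}), and the legs $\alpha_{\{i,j,k\}}$ are carried to $\alpha_1$ or $\alpha_2$ by explicit elements of $G_g$ (Lemma \ref{lem:gamma2-short}). Some such explicit decomposition, or explicit $G_g$-moves on legs, must replace your homology-plus-$\Gamma_2$ step; as written, the proposal has a genuine gap.
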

By Lemmas \ref{lem:short-leg-gen} and \ref{lem:all-Y-hom-short}, 
we see that every Y-homeomorphism is an element of $G_g$. 
Therefore, by Theorem \ref{thm:level2}, we conclude: 
\begin{cor}\label{cor:level-2-in-Gg}
%tmp
%(cor:level-2-in-Gg)
%
$\Gamma_2(N_g) \subset G_g$. \qed
\end{cor}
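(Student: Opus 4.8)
The plan is to prove Lemma~\ref{lem:all-Y-hom-short} by analyzing an arbitrary $Y$-homeomorphism $Y_{m,a}$ in terms of the leg $m$ and the arm $a$, and reducing to the short-leg case handled by Lemma~\ref{lem:short-leg-gen}. Since $[m] \in H_1(N_g;\mathbb{Z}_2)$ must satisfy $q_{os}([m]) \equiv 1 \pmod 2$ (an M-circle has odd normal framing), the class $[m]$ is, up to the action of $\mathcal{O}_g(q_{os})$, one of the standard classes $x_i$; more precisely I want to find $\phi \in G_g$ with $\phi(m)$ isotopic to one of the $x_i$, so that $\phi Y_{m,a}\phi^{-1} = Y_{\phi(m),\phi(a)}$ is a short-leg $Y$-homeomorphism. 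First I would show that $G_g$ acts transitively enough on the set of M-circles: using the extendable generators $t_{a_i}^2$, $t_{c_i}^2$, $t_{d_i}$, $t_{a_i}t_{a_{i+2}}t_{c_i}$ together with $\mathcal{YS}_g$, one can move any given M-circle to one of the $x_i$. The point is that $\mathcal{O}_g(q_{os})$ is generated by the images of exactly these elements (transvections $T_{d_i}$ with $q_{os}=1$, products $T_{a_i}T_{a_{i+2}}T_{c_i}$ with all three values $0$, and the squares which act trivially), so the induced action on $H_1(N_g;\mathbb{Z}_2)$ already suffices to send $[m]$ to some $x_i$; then a further correction inside $\Gamma_2(N_g)$ — but we cannot yet invoke Corollary~\ref{cor:level-2-in-Gg}, so this is the delicate point — must be handled directly.

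To get around the circularity with $\Gamma_2(N_g)$, I would instead argue as follows. Given $Y_{m,a}$, pick $\phi_0 \in G_g$ realizing an element of $\mathcal{O}_g(q_{os})$ that sends $[m]$ to $[x_i]$ for the appropriate $i$. Then $\phi_0(m)$ and $x_i$ are two M-circles in the same $\mathbb{Z}_2$-homology class; by a standard change-of-coordinates principle for non-orientable surfaces (they have diffeomorphic complements, since both complements are $N_{g-1}$ minus a disk when the class is primitive in a suitable sense), there is $\psi \in \mathcal{M}(N_g)$ with $\psi(\phi_0(m)) = x_i$ and $\psi$ supported near a subsurface — but the cleaner route is to write $Y_{\phi_0(m), \phi_0(a)}$ as a product of short-leg $Y$-homeomorphisms directly, using the crosscap pushing map picture from the proof of Lemma~\ref{lem:short-leg-gen}. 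Concretely, $\phi_0(m)$ bounds, together with $x_i$, an annulus or is related to $x_i$ by sliding along some path; expressing that slide via the loops $l_{i,j}$ and the relation $\psi(l_{i,j}) = Y_{i,j}$ shows $Y_{\phi_0(m),\phi_0(a)}$ is a product of $Y_{i,j}$'s and hence of short-leg ones. Thus each $Y$-homeomorphism is a product of $G_g$-conjugates of short-leg $Y$-homeomorphisms, which is exactly Lemma~\ref{lem:all-Y-hom-short}.

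Having Lemma~\ref{lem:all-Y-hom-short}, the final statement Corollary~\ref{cor:level-2-in-Gg} is immediate: Lemma~\ref{lem:short-leg-gen} says every short-leg $Y$-homeomorphism lies in $\mathcal{YS}_g \subseteq G_g$; $G_g$-equivalence preserves membership in $G_g$ (if $Y_2 = \phi Y_1 \phi^{-1}$ with $\phi \in G_g$ and $Y_1 \in G_g$, then $Y_2 \in G_g$); products of elements of $G_g$ stay in $G_g$. Hence every $Y$-homeomorphism lies in $G_g$. Since Theorem~\ref{thm:level2} asserts $\Gamma_2(N_g)$ is generated by $Y$-homeomorphisms, we get $\Gamma_2(N_g) \subseteq G_g$, completing the proof.

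The main obstacle I anticipate is precisely the potential circularity: Lemma~\ref{lem:all-Y-hom-short} wants to conjugate the leg of an arbitrary $Y$-homeomorphism into standard position using elements of $G_g$, but the most natural way to realize such a conjugation uses mapping classes that a priori only lie in $\mathcal{M}(N_g)$, and knowing they lie in $G_g$ would follow from Corollary~\ref{cor:level-2-in-Gg}, which is what we are trying to prove. The resolution must be that the conjugating element can be taken in $\mathcal{O}_g(q_{os})$'s explicit lift (the listed extendable generators) up to an element of $\Gamma_2(N_g)$ that is itself visibly a product of short-leg $Y$-homeomorphisms — so one never needs the full strength of $\Gamma_2(N_g) \subseteq G_g$, only the short-leg sub-case, which is Lemma~\ref{lem:short-leg-gen}. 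Making this bookkeeping precise — tracking which M-circles can be normalized using only short-leg $Y$-homeomorphisms and the four families of extendable generators — is the technical heart of the argument.
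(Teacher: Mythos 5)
Your closing deduction of the corollary is exactly the paper's: short-leg Y-homeomorphisms lie in $\mathcal{YS}_g \subset G_g$ by Lemma \ref{lem:short-leg-gen}, $G_g$-conjugation and products preserve membership in $G_g$, so Lemma \ref{lem:all-Y-hom-short} puts every Y-homeomorphism in $G_g$, and Theorem \ref{thm:level2} then gives $\Gamma_2(N_g)\subset G_g$. The trouble is that you also undertake to prove Lemma \ref{lem:all-Y-hom-short}, and that part has a genuine gap --- the one you yourself flag as ``the technical heart.'' Your normalization of the leg is only homological. Even granting that the images of the listed generators of $G_g$ generate $\mathcal{O}_g(q_{os})$ (this is Theorem \ref{thm:generator-Og} plus Lemma \ref{lem:gen-Og-os-red}, proved later in the paper but independently, so citing it is not circular), and even after adding the needed caveat that the characteristic class $x_1+\cdots+x_g$ can have odd $q_{os}$ yet is fixed by every isometry (it must be excluded by the observation that legs have non-orientable complement, as in Corollary \ref{cor:G-g-eq-r-circle}), applying $\phi_0\in G_g$ only makes $\phi_0(m)$ \emph{homologous} to $x_i$, not isotopic to it. The crosscap pushing argument of Lemma \ref{lem:short-leg-gen} works only when the leg is literally $x_i$, the boundary of the core of the $i$-th band, because only then does the Y-homeomorphism lie in the image of $\psi=\varphi\circ j$; a curve merely homologous to $x_i$ need not cobound an annulus with it nor differ from it by a ``slide along a path,'' so the sentence ``expressing that slide via the loops $l_{i,j}$'' has no content as written.

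Bridging the gap from homology-level to curve-level normalization inside $G_g$ is precisely what the paper spends most of its effort on: either the r-circle machinery (the explicit $G_g$-equivalences of Lemma \ref{lem:G-g-eq-r-circle}, Corollary \ref{cor:G-g-eq-r-circle}, and the case-by-case induction over Chillingworth's generators in Lemma \ref{lem:product-Y-homeo}), or, alternatively, Szepietowski's explicit finite generating set for $\Gamma_2(N_g)$ (Theorem \ref{thm:gamma2-gen}) combined with the explicit reduction of the curves $\alpha_{\{i,j,k\}}$ in Lemma \ref{lem:gamma2-short}. Your sketch offers no substitute for these computations, so as it stands the proposal establishes the corollary only modulo Lemma \ref{lem:all-Y-hom-short}, which it does not prove; if instead you simply cite that lemma as a known result, your last paragraph coincides with the paper's proof.
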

\begin{rem}
While the author was writing this paper, 
Szepietowski informed the author that he found 
a finite system of generators for $\Gamma_2(N_g)$. 
In the next subsection, 
we introduce his system of generators and prove 
Lemma \ref{lem:all-Y-hom-short} by using his result. 
In this subsection, we show Lemma \ref{lem:all-Y-hom-short} 
by our original proof. 
\end{rem}
\begin{figure}[hbtp]
\includegraphics[height=2.5cm]{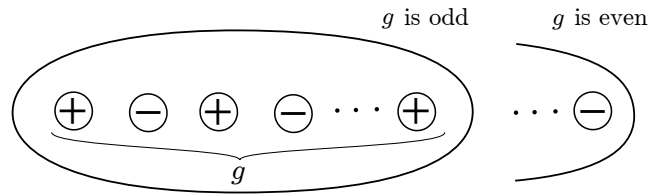}
\caption{
%tmp
%{\bf fig:o-standard-pm}
%
Diagram indicating $o$-standard $N_g$ in $S^4$. 
}
\label{fig:o-standard-pm}
\end{figure}
As shown in Figure \ref{fig:o-standard-pm}, 
we use the symbol $\oplus$ (resp. $\ominus$) 
to indicate the place where the M\"{o}bius band are attached such that 
$q_{os} (x_i)= +1$ (resp. $q_{os} (x_i)= -1$)  
for the circle $x_i$ indicated in Figure \ref{fig:o-standard-basis}. 
We denote an element $x = \sum_{i=1}^g \epsilon_i x_i$ 
$\in H_1(N_g ; \mathbb{Z}_2)$, 
where $\epsilon_i = 0$ or $1$, by a sequence 
of symbols $+, -, \oplus, \ominus$ of length $g$ with $[,]$ 
which are settled by the rule: 
if $\epsilon_{2i-1} = 0$ then the $2i-1$-st symbol is $+$, 
if $\epsilon_{2i-1} = 1$ then the $2i-1$-st symbol is $\oplus$, 
if $\epsilon_{2i} = 0$ then the $2i$-th symbol is $-$, 
and if $\epsilon_{2i} = 1$ then the $2i$-th symbol is $\ominus$. 
For example, when $g=7$, we denote an element $x_2 + x_3 + x_6 + x_7$ 
by $[+ \ominus \oplus - + \ominus \oplus]$. 
This sequence is called the {\em r-sequence\/} associated to $x$. 
For the r-sequence associated to $x$, we settle a simple closed curve on $N_g$ 
by the following rule. 
\begin{figure}[hbtp]
\includegraphics[height=2.5cm]{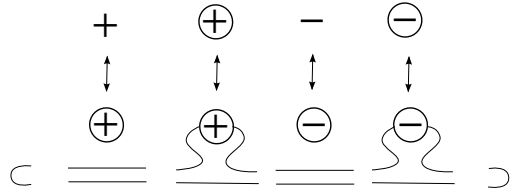}
\caption{
%tmp
%{\bf fig:parts}
%
Parts of r-circles. 
}
\label{fig:parts}
\end{figure}
For the symbols in this sequence, we put arcs on $N_g$ 
indicated in the bottom of Figure \ref{fig:parts}, glue them along the boundaries, 
and cap by the arc indicated on the left of Figure \ref{fig:parts} from the left 
and by the arc indicated on the right of Figure \ref{fig:parts} from the right. 
We call this circle the {\em r-circle\/} associated to $x$ 
and denote by $R(x)$. 
For an element $x = \sum_{i=1}^g \epsilon_i x_i$ 
$\in H_1(N_g ; \mathbb{Z}_2)$, 
where $\epsilon_i = 0$ or $1$, 
we define $\text{supp}(x) = \{ x_i \ | \ \epsilon_i = 1\}$.  

Two simple closed curves $c_1$ and $c_2$ on $N_g$ are {\em $G_g$-equivalent\/} 
($c_1 \underset{G_g}{\sim} c_2$) 
if there is an element $\phi$ of $G_g$ such that $\phi(c_1) = c_2$. 
\begin{lem}\label{lem:G-g-eq-r-circle} 
%tmp
%{\bf(lem:G-g-eq-r-circle)}
%
If $g=1$, then every r-circle is $G_g$-equivalent to $R([+])$ or $R([\oplus])$. 
If $g=2$, then every r-circle is $G_g$-equivalent to $R([+ -])$, 
$R([\oplus -])$, $R([+ \ominus])$ or $R([\oplus \ominus])$. 
If $g \geq 3$ is odd, then every r-circle is $G_g$-equivalent to 
$R([+ - + - \cdots +])$, $R([\oplus - + - \cdots +])$, 
$R([+ \ominus + - \cdots +])$, $R([\oplus \ominus + - \cdots +])$, 
$R([\oplus - \oplus - \cdots +])$ or 
$R([\oplus \ominus \oplus \ominus \cdots \oplus])$. 
If $g \geq 4$ is even, then every r-circle is $G_g$-equivalent to 
$R([+ - + - \cdots -])$, $R([\oplus - + - \cdots -])$, 
$R([+ \ominus + - \cdots -])$, $R([\oplus \ominus + - \cdots -])$, 
$R([\oplus - \oplus - \cdots -])$ or 
$R([\oplus \ominus \oplus \ominus \cdots \ominus])$. 
\end{lem}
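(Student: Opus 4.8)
The plan is to classify r-circles up to $G_g$-equivalence by reducing an arbitrary r-sequence step by step to one of the listed normal forms, using the generators of $G_g$ whose action on $H_1(N_g;\mathbb Z_2)$ we understand. The key invariants that a $G_g$-action preserves on a homology class $x$ are the value $q_{os}(x)\in\mathbb Z_4$ and, because the $Y$-homeomorphisms in $\mathcal{YS}_g$ act trivially on $H_1$ while the other listed generators act by transvections $T_a$ (with $q_{os}(a)=1$) or products $T_aT_bT_{a+b}$ (with all three values $0$) and squares $t_\ell^2$ (acting trivially), the orbit of $x$ under $\langle\mathcal{O}_g(q_{os})\text{-part of }G_g\rangle$ is exactly the set of classes with the same $q_{os}$-value among those reachable — and in fact one checks the image of $G_g$ in $\mathrm{Aut}(H_1)$ equals $\mathcal O_g(q_{os})$, so the orbits of r-circles are in bijection with the values of $q_{os}$ together with whatever finitely many "parity'' data distinguish the normal forms. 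So the first step is to compute $q_{os}$ on each of the candidate normal forms and observe they are pairwise distinct (or distinguished by the relevant extra invariant), which shows the lists cannot be shortened.

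Next I would do the reduction itself, working on the r-sequence combinatorially. Using $t_{a_i}^2$ and $t_{c_i}^2$ (which realize $T_{x_i+x_{i+1}}$, $T$ of the $c_i$-class, etc.) one can "transport'' a $\oplus$ or $\ominus$ along the surface and merge or cancel pairs of like symbols; using $t_{d_i}$ (a transvection about a class $d_i$ with $q_{os}(d_i)=1$) one can flip pairs of $\oplus/\ominus$ entries that are adjacent in the chain; and using $t_{a_i}t_{a_{i+2}}t_{c_i}$ one gets the product-of-three-transvections move that lets one cancel a $0$-valued triple. The effect is: any collection of $\oplus$'s in odd slots beyond the first can be slid down to the first odd slot and absorbed in pairs, similarly any $\ominus$'s in even slots beyond the second can be pushed to the second even slot, and mixed configurations get normalized; the surviving cases are exactly (no crosscap-marked slots occupied) $[+-+-\cdots]$, (one $\oplus$) $[\oplus-+-\cdots]$, (one $\ominus$) $[+\ominus+-\cdots]$, (one of each) $[\oplus\ominus+-\cdots]$, (two $\oplus$'s which cannot both be eliminated) $[\oplus-\oplus-\cdots]$, and (all marked) $[\oplus\ominus\oplus\ominus\cdots]$, with the small-genus cases $g=1,2$ handled by direct inspection since fewer moves are available. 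Each move should be justified by explicitly writing the homology class before and after and checking it equals the action of the corresponding generator on $H_1(N_g;\mathbb Z_2)$, plus a picture-level remark that the curve-level equality (not merely homological) holds because $G_g$ acts on isotopy classes of curves and an r-circle is determined up to isotopy by its r-sequence.

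The main obstacle I expect is bookkeeping the interaction between the $q_{os}$-value and the support parity: the point is that $q_{os}$ alone does not determine the orbit — e.g. $[\oplus-+-\cdots]$ and $[\oplus-\oplus-\cdots]$ can have related values mod $4$ — so one must be careful that the reduction moves genuinely connect the configurations claimed and do not accidentally collapse two distinct normal forms. Concretely the delicate part is showing that a configuration with exactly two $\oplus$-marked odd slots and nothing else cannot be reduced to one with a single $\oplus$ (this is where $q_{os}([\oplus-\oplus-\cdots])=+1+1=2\neq\pm1$ does the separating work), and dually for the $\ominus$ side and the all-marked case; conversely one must verify the reduction is always possible when the $q_{os}$-value does match, which amounts to checking that the group generated by the transvection-type generators of $G_g$ acts transitively on each $q_{os}$-level set of r-circle classes, equivalently that its image in $\mathrm{Aut}(H_1(N_g;\mathbb Z_2))$ is all of $\mathcal O_g(q_{os})$. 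Once that transitivity is in hand the lemma follows, and the small cases $g=1,2$ are immediate.
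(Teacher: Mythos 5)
Your reduction operates at the level of $H_1(N_g;\mathbb{Z}_2)$, but the lemma is a statement about curves: $c_1\underset{G_g}{\sim}c_2$ means there is $\phi\in G_g$ with $\phi(c_1)=c_2$ as simple closed curves, not merely $\phi_*[c_1]=[c_2]$. Even granting that the image of $G_g$ in $Aut(H_1(N_g;\mathbb{Z}_2))$ is large and that you can move the class of $R(x)$ to the class of a normal form, this only yields some $\phi\in G_g$ for which $\phi(R(x))$ is \emph{a} simple closed curve in the right homology class; it need not be isotopic to the r-circle representing that class, since a mod-$2$ homology class does not determine a curve up to isotopy, let alone up to the action of the subgroup $G_g$. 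Your ``picture-level remark'' (curve equality follows because an r-circle is determined by its r-sequence) does not close this gap: the image of an r-circle under $t_{d_i}$, $t_{a_i}^2$, $t_{c_i}^2$ or $t_{a_i}t_{a_{i+2}}t_{c_i}$ is in general not an r-circle at all. This is exactly where the entire content of the paper's proof lies: it writes down explicit words in the generators, crucially including the homologically trivial $Y_{i,j}$'s (for instance $Y_{i+2,i}Y_{i+1,i}t_{d_i}$ and $Y_{i+3,i+1}Y_{i+2,i+1}t_{a_i}^{-2}(t_{a_i}t_{a_{i+2}}t_{c_i})$), whose role is precisely to push the image curve back onto an honest r-circle, verified by drawing the curves, and then inducts on $g$ using these curve-level equivalences. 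Your proposal has no mechanism playing this corrective role, so the key reduction step is unproved.

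Two further points. First, the homological transitivity you want to invoke is false as stated: $x_1+\cdots+x_g$ is the characteristic element of the mod-$2$ intersection form (it pairs with every $v$ to give $(v\cdot v)_2$), hence is fixed by every element of $\mathcal{O}_g(q_{os})$; for $g$ odd both $x_1$ and $x_1+\cdots+x_g$ have $q_{os}=1$ but lie in different orbits, so $q_{os}$-level sets are not single orbits and ``image equals $\mathcal{O}_g(q_{os})$'' is not the right reduction even homologically. (Also, that image statement is only available through Nowik's Theorem \ref{thm:generator-Og} and Lemma \ref{lem:gen-Og-os-red}, proved later in the paper; not circular, but not free at this point.) Second, the lemma only asserts that every r-circle is equivalent to one on the list, so your effort to show the normal forms are pairwise inequivalent is unnecessary; conversely, the curve-level information the lemma certifies (e.g.\ which normal forms are A-circles or have orientable complement) is exactly what Corollary \ref{cor:G-g-eq-r-circle} extracts afterwards, and a purely homological equivalence could not support that use.
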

\begin{proof}
If $g=1$ or $2$, then the conclusion is trivial. \\
If $g \geq 3$, then 
%tmp
%{\bf (eqn:G-g-equiv-1) }
%
\begin{equation}
\begin{aligned}\label{eqn:G-g-equiv-1}
R([\cdots - + \ominus \cdots]) &\underset{G_g}{\sim} 
R([\cdots \ominus + - \cdots]), \\
R([\cdots + - \oplus \cdots]) &\underset{G_g}{\sim} 
R([\cdots \oplus - + \cdots]), \\
R([\cdots - \oplus \ominus \cdots]) &\underset{G_g}{\sim} 
R([\cdots \ominus \oplus - \cdots]), \\ 
R([\cdots + \ominus \oplus \cdots]) &\underset{G_g}{\sim} 
R([\cdots \oplus \ominus + \cdots]),
\end{aligned}
\end{equation}
where the left most symbols are the $i$-th symbol, 
since 
$Y_{i+2,i} Y_{i+1,i} t_{d_i} R([\cdots - + \ominus \cdots]) 
= R([\cdots \ominus + - \cdots])$, 
$Y_{i+2,i} Y_{i+1,i} t_{d_i} R([\cdots + - \oplus \cdots])  
= R([\cdots \oplus - + \cdots])$, 
$Y_{i+2,i+1} Y_{i+1,i+2} t_{d_i} R([\cdots - \oplus \ominus \cdots]) 
= R([\cdots \ominus \oplus - \cdots])$ and 
$Y_{i+2,i+1} Y_{i+1,i+2} t_{d_i} R([\cdots + \ominus \oplus \cdots]) 
= R([\cdots \oplus \ominus + \cdots])$. 
Therefore, when $g=3$, for two cases $R([+ - \oplus])$ and 
$R([+ \ominus \oplus])$ which are not listed in the statement, 
we see $R([+ - \oplus]) \underset{G_g}{\sim} R([\oplus - +])$ 
and $R([+ \ominus \oplus]) \underset{G_g}{\sim} R([\oplus \ominus +])$. 

If $g \geq 4$, then  
%tmp
%{\bf (eqn:G-g-equiv-2) }
%
\begin{equation}
\begin{aligned}\label{eqn:G-g-equiv-2}
R([\cdots - \oplus \ominus \oplus \cdots]) &\underset{G_g}{\sim} 
R([\cdots - \oplus - + \cdots]), \\
R([\cdots + \ominus \oplus \ominus \cdots]) &\underset{G_g}{\sim} 
R([\cdots + \ominus + - \cdots]), \\
R([\cdots \ominus \oplus \ominus + \cdots]) &\underset{G_g}{\sim} 
R([\cdots - + \ominus + \cdots]), \\ 
R([\cdots \oplus \ominus \oplus - \cdots]) &\underset{G_g}{\sim} 
R([\cdots + - \oplus - \cdots]), \\
R([\cdots - \oplus - \oplus \cdots]) &\underset{G_g}{\sim} 
R([\cdots \ominus + \ominus + \cdots]), \\ 
R([\cdots + \ominus + \ominus \cdots]) &\underset{G_g}{\sim} 
R([\cdots \oplus - \oplus - \cdots]), \\
\end{aligned}
\end{equation}
where the left most symbols are the $i$-th symbol, 
since \\
$Y_{i+3,i+1} Y_{i+2,i+1} t_{a_i}^{-2} (t_{a_i} t_{a_{i+2}} t_{c_i}) 
R([\cdots - \oplus \ominus \oplus \cdots]) = 
R([\cdots - \oplus - + \cdots])$, \\
$Y_{i+3,i+1} Y_{i+2,i+1} t_{a_i}^{-2} (t_{a_i} t_{a_{i+2}} t_{c_i}) 
R([\cdots + \ominus \oplus \ominus \cdots]) = 
R([\cdots + \ominus + - \cdots])$, \\
$Y_{i,i+2} Y_{i+1,i+2} t_{a_{i+2}}^2 (t_{a_i} t_{a_{i+2}} t_{c_i})^{-1} 
R([\cdots \ominus \oplus \ominus + \cdots]) = 
R([\cdots - + \ominus + \cdots])$, \\
$Y_{i,i+2} Y_{i+1,i+2} t_{a_{i+2}}^2 (t_{a_i} t_{a_{i+2}} t_{c_i})^{-1} 
R([\cdots \oplus \ominus \oplus - \cdots]) =  
R([\cdots + - \oplus - \cdots])$, \\
$ Y_{i+1,i} (t_{a_i} t_{a_{i+2}} t_{c_i}) Y_{i+2,i+3}^{-1} 
R([\cdots - \oplus - \oplus \cdots]) =  
R([\cdots \ominus + \ominus + \cdots])$, \\
$ Y_{i+1,i} (t_{a_i} t_{a_{i+2}} t_{c_i}) Y_{i+2,i+3}^{-1} 
R([\cdots + \ominus + \ominus \cdots]) = 
R([\cdots \oplus - \oplus - \cdots])$. 

When $g \geq 4$, we get our conclusion by the induction on $g$ and $G_g$-equivalences 
(\ref{eqn:G-g-equiv-1}) and (\ref{eqn:G-g-equiv-2}).
\end{proof}
If the complement of an M-circle $m$ is orientable, then any circle intersecting $m$ 
transversely in one point is an M-circle. 
Therefore the leg of evey Y-homeomorphism is an M-circle whose complement is 
non-orientable. 
Every element of $G_g$ preserves $q_{os}$, 
the r-circles $R([+ - \cdots \pm])$, $R([\oplus - \oplus - \cdots \pm])$ 
and $R([\oplus \ominus + \cdots \pm])$ are A-circles, 
and the complements of $R([\oplus \ominus \cdots \oplus])$ and 
$R([\oplus \ominus \cdots \ominus])$ are orientable, hence we see: 
\begin{cor}\label{cor:G-g-eq-r-circle}
%tmp
%{\bf (cor:G-g-eq-r-circle)}
%
If an r-circle $R(x)$ is a leg of a Y-homeomorphism, 
then $R(x)$ is $G_g$-equivalent to 
$R([\oplus - + \cdots])$ or $R([+ \ominus + \cdots])$. 
\qed
\end{cor}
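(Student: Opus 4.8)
The plan is to deduce the corollary directly from Lemma~\ref{lem:G-g-eq-r-circle} by discarding those of the standard r-circles produced there that cannot serve as the leg of a Y-homeomorphism. First I would pin down the two constraints a leg must satisfy. In $Y_{m,a}$ the leg $m$ is an M-circle and the arm $a$ is an A-circle meeting $m$ transversely in a single point; if the complement $N_g\setminus m$ were orientable, then $a$, which crosses $m$ once, would be forced to be an M-circle, a contradiction. Hence the leg of every Y-homeomorphism is an M-circle whose complement in $N_g$ is non-orientable. Both of these are properties invariant under any self-diffeomorphism of $N_g$, hence under $G_g$-equivalence, so the set of r-circles that can occur as a leg is a union of $G_g$-equivalence classes.

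Next I would apply Lemma~\ref{lem:G-g-eq-r-circle}: up to $G_g$-equivalence $R(x)$ is one of the standard r-circles listed there (six of them when $g\ge 3$, and a short list checked directly when $g\le 2$), and it remains to eliminate all but two. Since $(x_i\cdot x_j)_2=0$ for $i\ne j$, the quadratic form is additive on sums of distinct $x_i$, i.e. $q_{os}\bigl(\sum_{i\in S}x_i\bigr)=\sum_{i\in S}q_{os}(x_i)$; computing this for the supports $\varnothing$, $\{x_1,x_3\}$ and $\{x_1,x_2\}$ gives $q_{os}$-value $0$, $2$ and $0$ for $R([+-\cdots\pm])$, $R([\oplus-\oplus-\cdots\pm])$ and $R([\oplus\ominus+\cdots\pm])$ respectively. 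As the parity of $q_{os}$ distinguishes two-sided from one-sided curves, these three r-circles are A-circles, hence not legs. Among the remaining standard representatives, the one with all bands occupied, namely $R([\oplus\ominus\cdots\oplus])$ if $g$ is odd and $R([\oplus\ominus\cdots\ominus])$ if $g$ is even, has orientable complement, as one reads off from the motion picture of the $o$-standard embedding (Figures~\ref{fig:o-standard-basis} and~\ref{fig:parts}), so by the first paragraph it is not a leg either. The only standard r-circles left are $R([\oplus-+\cdots])$ and $R([+\ominus+\cdots])$, which is the assertion of the corollary; the cases $g=1,2$ are handled the same way from the shorter lists in Lemma~\ref{lem:G-g-eq-r-circle}.

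I expect no genuine obstacle here: the only work is the bookkeeping in the second step, recording the $q_{os}$-value of each standard r-sequence and reading off the orientability of its complement from the figures, together with keeping the parity-of-$g$ case division straight. The substantive content of the subsection is Lemma~\ref{lem:G-g-eq-r-circle} itself, and the corollary is simply that lemma's output passed through the two filters ``M-circle'' and ``non-orientable complement''.
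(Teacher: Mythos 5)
Your argument is correct and is essentially the paper's own: the paper likewise deduces the corollary from Lemma~\ref{lem:G-g-eq-r-circle} by observing that the leg of a Y-homeomorphism must be an M-circle with non-orientable complement, that $R([+ - \cdots \pm])$, $R([\oplus - \oplus - \cdots \pm])$ and $R([\oplus \ominus + \cdots \pm])$ are A-circles, and that $R([\oplus \ominus \cdots \oplus])$, $R([\oplus \ominus \cdots \ominus])$ have orientable complement. Your only addition is the explicit justification of the A-circle claims via the parity of $q_{os}$, which the paper asserts without comment; that is a harmless (and valid) elaboration, not a different route.
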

By investigating the action of Chillingworth's generators for $\mathcal{M}(N_g)$ on 
legs of Y-homeomoprhisms, we see: 
\begin{lem}\label{lem:product-Y-homeo} 
%tmp
%{\bf(lem:product-Y-homeo)}
%
Every Y-homeomorphism is a product of Y-homeomorphisms 
whose legs are r-circles. 
\end{lem}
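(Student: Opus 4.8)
The plan is to take an arbitrary Y-homeomorphism $Y_{m,a}$ and show, by successively modifying it through multiplication by other Y-homeomorphisms, that one can reduce to the case where the leg is an r-circle. First I would recall that a Y-homeomorphism is determined (up to the ambiguity already noted in the text, $Y_2 = Y_{\phi(m),\phi(a)}$ when $Y_2 = \phi Y_1 \phi^{-1}$) by the pair consisting of its leg $m$, an M-circle whose complement is non-orientable, and its arm $a$, an A-circle meeting $m$ transversely in one point. The mod-$2$ homology class $[m] \in H_1(N_g;\mathbb{Z}_2)$ is the key invariant to control: I would first arrange, using Chillingworth's generators $t_{a_1},\dots,t_{a_{g-1}}$, $t_{b_j}$, $Y_{m_{g-1},a_{g-1}}$ from Theorem~\ref{thm:Chillingworth} (equivalently, by Lemma~\ref{lem:generator}, using $\mathcal{YS}_g$ together with the $t_{a_i}$ and $t_{c_i}$), that the class $[m]$ is carried to the class of an r-circle $R(x)$ for the appropriate $x$; this is possible because $\mathcal{M}(N_g)$ acts transitively enough on homology classes of a fixed "type" (same $q_{os}$-value, same orientability of complement), and every class supporting an M-circle with non-orientable complement is realized by some r-circle by construction.

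The second, more delicate step is to pass from "$[m]$ is the class of an r-circle" to "$m$ itself is (isotopic to) an r-circle." Here I would use the fact that two simple closed curves with the same mod-$2$ homology class and the same topological type of regular neighborhood and complement differ by an element of $\Gamma_2(N_g)$, which by Theorem~\ref{thm:level2} is generated by Y-homeomorphisms. So there is a product of Y-homeomorphisms $\Psi$ with $\Psi(R(x)) = m$. Writing $Y_{m,a} = \Psi \, Y_{R(x), \Psi^{-1}(a)} \, \Psi^{-1}$ and expanding $\Psi$ as a word in Y-homeomorphisms $Z_1 \cdots Z_k$, one gets $Y_{m,a}$ as a product of conjugates of $Y_{R(x),\Psi^{-1}(a)}$ by sub-words, each of which is again a Y-homeomorphism (a conjugate of a Y-homeomorphism by any mapping class is a Y-homeomorphism). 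One must be slightly careful that the arm stays an A-circle meeting the leg once under these conjugations, but this is automatic since conjugation carries the defining configuration $m \cup a$ to another such configuration. Iterating, one expresses $Y_{m,a}$ as a product of Y-homeomorphisms whose legs are r-circles, plus possibly finitely many $Z_i$-type factors whose legs we then treat the same way — so a careful induction on, say, the geometric complexity of the leg (its intersection number with a fixed system of r-circles, or its position in the Szepietowski-type generating process) is needed to close the loop.

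The main obstacle I expect is precisely controlling this induction: it is not a priori clear that the "simplification moves" applied to one Y-homeomorphism do not re-complicate the legs of the others in the product. The clean way around this is to appeal to the explicit finite generating set for $\Gamma_2(N_g)$ — in particular one wants a generating set of $\Gamma_2(N_g)$ consisting of Y-homeomorphisms \emph{whose legs are already r-circles} (or can be brought to r-circles by a single further move), which is exactly what the remark preceding this lemma promises via Szepietowski's newly communicated result. With such a generating set in hand, the proof becomes: write the given Y-homeomorphism (via Theorem~\ref{thm:level2}, or rather by first reducing its leg's homology class using $G_g$ as in Lemma~\ref{lem:G-g-eq-r-circle} and Corollary~\ref{cor:G-g-eq-r-circle}) as a product of these standard generators, and observe each factor has an r-circle leg. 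So the real content is matching the geometric normal form of an arbitrary Y-homeomorphism's leg against the r-circle normal form, which I would carry out by an explicit isotopy argument using the planar pictures of Figures~\ref{fig:parts} and~\ref{fig:o-standard-pm}, reducing the number of intersections of $m$ with the standard r-circle system one crossing at a time.
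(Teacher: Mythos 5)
Your primary route has a genuine gap which you flag but never close. Writing $Y_{m,a}=\Psi\, Y_{R(x),\Psi^{-1}(a)}\,\Psi^{-1}$ with $\Psi$ a product of Y-homeomorphisms $Z_1\cdots Z_k$ is circular: the factors $Z_i$ are arbitrary Y-homeomorphisms, i.e.\ exactly the objects the lemma is about, and no induction parameter is exhibited that demonstrably decreases; moreover the input you invoke (two M-circles with non-orientable complement in the same mod-$2$ class are related by an element of $\Gamma_2(N_g)$) is itself a nontrivial unproved assertion that the paper never uses. Your closing suggestion, to isotope the leg $m$ onto an r-circle by removing intersections with the standard r-circle system one at a time, cannot work as stated: there are only finitely many r-circles, while for $g\geq 3$ a fixed mod-$2$ class contains infinitely many isotopy classes of M-circles with non-orientable complement, so a general leg is simply not isotopic to an r-circle. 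The reduction has to take place at the level of the group element, with compensating Y-homeomorphisms, not by an isotopy of $m$.

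Your fallback via Szepietowski's generating set is closer to a valid argument, but as phrased it proves a different statement. For the present lemma you would need the legs $\alpha_i$ and $\alpha_{\{i,j,k\}}$ of the generators in Theorem~\ref{thm:gamma2-gen} to be literally isotopic to r-circles; the hedge ``or can be brought to r-circles by a single further move'' (that is, $G_g$-equivalence of legs, as in Lemma~\ref{lem:gamma2-short}) only gives Lemma~\ref{lem:all-Y-hom-short}, because $\phi Y\phi^{-1}$ with $\phi$ containing Dehn twists is a single Y-homeomorphism whose leg need not be an r-circle, not a product of r-circle-legged ones. If you verify by a picture that $\alpha_{\{i,j,k\}}$ is isotopic to an r-circle and treat the small-genus cases separately, this route does work, but it is then essentially the paper's alternative proof of Lemma~\ref{lem:all-Y-hom-short}, not a proof in the spirit of the present subsection. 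The paper's own proof of this lemma is quite different and entirely explicit: choose $\phi$ with $\phi(s)=m$ for an r-circle $s$, write $\phi$ as a word in the generators of Lemma~\ref{lem:generator}, observe that $t_{a_i}^2$ and $t_{c_i}^2$ are directly products of Y-homeomorphisms with r-circle legs, and then check case by case (three configurations for $t_{a_i}^{\pm1}$, fifteen for $t_{c_i}^{\pm1}$) that each generator carries every r-circle to the image of an r-circle under an element of $\mathcal{YS}_g$; since $\mathcal{YS}_g$ is generated by the $Y_{i,j}$, whose legs are the r-circles $x_k$, the induction on word length closes. None of this concrete control appears in your proposal, so as it stands the proof is incomplete.
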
 
\begin{proof}
Since $\{ x_1, \ldots, x_g \}$ are r-circles, $Y_{i,j}$ is 
a Y-homeomorphism whose leg is an r-circle. 
For every Y-homeomorphism $Y_{m,a}$, there is an r-circle $s$ and an element 
$\phi \in \mathcal{M}(N_g)$ such that $\phi(s) = m$, that is, 
$Y_{m,a} = \phi \ Y_{s, \phi^{-1}(a)} \ \phi^{-1}$. 
Therefore, by Lemma \ref{lem:generator}, 
it suffices to show that  
there are $\phi_i, \phi'_i, \psi_i, \psi'_i$ $\in \mathcal{YS}_g$ 
and r-circles $s_i,s'_i, t_i, t'_i$ such that 
$t_{a_i} (s) = \phi_i (s_i)$, $t_{a_i}^{-1} (s) = \phi'_i (s'_i)$, 
$t_{c_i} (s) = \psi_i (t_i)$ and $t_{c_i}^{-1} (s) = \psi'_i (t'_i)$ 
for every r-circle $s$. 
As observed in the proof of \cite[Lemma 3.1]{Szepietowski}, 
$Y_{x_i,a_i}$ preserves $a_i$ and exchanges the sides of $a_i$, 
hence $t_{a_i}= Y_{x_i,a_i} t_{a_i}^{-1} Y_{x_i,a_i}^{-1}$, 
therefore 
$t_{a_i}^2 = t_{a_i} Y_{x_i,a_i} t_{a_i}^{-1} Y_{x_i,a_i}^{-1}$
$= Y_{t_{a_i}(x_i),a_i} Y_{x_i,a_i}^{-1}$ 
$= Y_{i+1,i} Y_{i,i+1}^{-1}$. 
By the same way as above, we see that 
$t_{c_i} = Y_{x_i,c_i} t_{c_i}^{-1} Y_{x_i, c_i}^{-1}$, 
therefore 
$t_{c_i}^2 = t_{c_i} Y_{x_i,c_i} t_{c_i}^{-1} Y_{x_i,c_i}^{-1}$
$= Y_{t_{c_i}(x_i),c_i} Y_{x_i,c_i}^{-1}$. 
Since $Y_{g,i+3} \cdots Y_{i+4,i+3} Y_{i,i+1} \cdots Y_{1,i+1} t_{c_i}(x_i)$ 
is isotopic to $R(x_{i+1} + x_{i+2} + x_{i+3})$, 
$t_{c_i}^2$ is a products of Y-homeomorphisms whose legs are r-circles. 
From the above observation, 
it suffices to show that one of $t_{a_i} (s) = \phi_i (s_i)$, 
$t_{a_i}^{-1} (s) = \phi'_i (s'_i)$, and 
one of $t_{c_i} (s) = \psi_i (t_i)$, $t_{c_i}^{-1} (s) = \psi'_i (t'_i)$. 

Since $a_i$ does not intersects $R(x)$ such that 
$\text{supp}(x) \cap \{x_i, x_{i+1} \} = \emptyset$, 
we only consider the action of $t_{a_i}$ on $R(x)$ 
such that $\text{supp}(x) \cap \{x_i, x_{i+1} \} \not= \emptyset$. 
When we consider the action of $t_{a_i}$ and Y-homeomorphisms, we do not need to 
take care of the sign on the M\"{o}bius bands. 
Hence, in symbols of r-sequences, we change $+$ and $-$ into $\times$, 
and $\oplus$ and $\ominus$ into $\otimes$. 
There are 3 cases to consider: $R([\cdots \otimes \times \cdots])$, 
$R([\cdots \times \otimes \cdots])$, and 
$R([\cdots \otimes \otimes \cdots])$, 
where the $i$-th and $i+1$-st symbols are indicated. 
The 3-rd r-circle does not intersect $a_i$, hence we ignore this. 
By drawing figures of r-circles, we see: 
$Y_{i,i+1}(t_{a_i}^{-1}(R([\cdots \otimes \times \cdots]))) 
= R([\cdots \times \otimes \cdots])$ and 
$Y_{i+1,i}(t_{a_i}(R([\cdots \times \otimes \cdots]))) 
=R([\cdots \otimes \times \cdots])$. 

By the same reasons as in the previous paragraph, 
it suffice to consider the action of $t_{c_i}$ on $R(x)$ 
such that $\text{supp}(x) \cap 
\{x_i, x_{i+1}, x_{i+2}, x_{i+3} \} \not= \emptyset$, and, 
in symbols of r-sequences, we change $+$ and $-$ into $\times$, 
and $\oplus$ and $\ominus$ into $\otimes$. 
There are 15 cases to consider: 
(1) $R([\cdots \otimes \times \times \times \cdots])$, 
(2) $R([\cdots \times \otimes \times \times \cdots])$, 
(3) $R([\cdots \otimes \otimes \times \times \cdots])$, 
(4) $R([\cdots \times \times \otimes \times \cdots])$, 
(5) $R([\cdots \otimes \times \otimes \times \cdots])$, 
(6) $R([\cdots \times \otimes \otimes \times \cdots])$, 
(7) $R([\cdots \otimes \otimes \otimes \times \cdots])$, 
(8) $R([\cdots \times \times \times \otimes \cdots])$, 
(9) $R([\cdots \otimes \times \times \otimes \cdots])$, 
(10) $R([\cdots \times \otimes \times \otimes \cdots])$, 
(11) $R([\cdots \otimes \otimes \times \otimes \cdots])$, 
(12) $R([\cdots \times \times \otimes \otimes \cdots])$, 
(13) $R([\cdots \otimes \times \otimes \otimes \cdots])$, 
(14) $R([\cdots \times \otimes \otimes \otimes \cdots])$, 
(15) $R([\cdots \otimes \otimes \otimes \otimes \cdots])$, 
where the $i$-th, $i+1$-st, $i+2$-nd and $i+3$-rd symbols are indicated.
Since (3) (6) (12) and (15) do not intersect $c_i$, 
$t_{c_i}$ does not change these r-circles. 
By drawing figures of r-circles, 
we see: \\
(1) $Y_{i,i+1} Y_{i+2,i+3} Y_{i+1,i+3} Y_{i+1,i+2}^{-1}
(t_{c_i}^{-1}(R([\cdots \otimes \times \times \times \cdots])))
=R([\cdots \times \otimes \otimes \otimes \cdots])$, \\
(2) $Y_{i+1,i} Y_{i+2,i+3} 
(t_{c_i}(R([\cdots \times \otimes \times \times \cdots])))
=R([\cdots \otimes \times \otimes \otimes \cdots])$, \\
(4) $ Y_{i+2,i+3} Y_{i+1,i}
(t_{c_i}^{-1}(R([\cdots \times \times \otimes \times \cdots])))
=R([\cdots \otimes \otimes \times \otimes \cdots])$, \\
(5) $ Y_{i+1,i+2} Y_{i,i+2} Y_{i+3,i+2}^{-1} Y_{i+2,i+3}^{-1} Y_{i,i+3}^{-1} 
(t_{c_i}^{-1}(R([\cdots \otimes \times \otimes \times \cdots])))
=R([\cdots \otimes \times \otimes \times \cdots])$, \\
(7) $Y_{i,i+3} Y_{i+1,i+3} Y_{i+2,i+3} 
(t_{c_i}^{-1}(R([\cdots \otimes \otimes \otimes \times \cdots])))
=R([\cdots \times \times \times \otimes \cdots])$,\\
(8) $ Y_{i+3,i+2} Y_{i+1,i} Y_{i+2,i} Y_{i+2,i+1}^{-1} 
(t_{c_i}(R([\cdots \times \times \times \otimes \cdots])))
= R([\cdots \otimes \otimes \otimes \times \cdots])$, \\
(9) $Y_{i+2,i+3} Y_{i+1,i} Y_{i+3,i} Y_{i+3,i+1}^{-1} Y_{i+3,i+2} 
Y_{i+2,i+3} Y_{i+1,i+3} Y_{i+1,i+2}^{-1} Y_{i,i+3} Y_{i,i+2}^{-1} Y_{i,i+1}
(t_{c_i}^{-1}(R([\cdots \otimes \times \times \otimes \cdots])))
=R([\cdots \otimes \times \times \otimes \cdots])$, \\
(10) $Y_{i+2,i+1} Y_{i+3,i+1} Y_{i,i+3}^{-1} Y_{i+1,i}^{-1} Y_{i+3,i}^{-1}
(t_{c_i}(R([\cdots \times \otimes \times \otimes \cdots])))
=R([\cdots \times \otimes \times \otimes \cdots])$,\\
(11) $ Y_{i,i+2} Y_{i+1,i+2} Y_{i+3,i+2} Y_{i+2,i} Y_{i+2,i+1}^{-1} 
(t_{c_i}(R([\cdots \otimes \otimes \times \otimes \cdots])))
=R([\cdots \times \times \otimes \times \cdots])$, \\
(13) $Y_{i+3,i+1} Y_{i+2,i+1} Y_{i,i+1} Y_{i+1,i+3} Y_{i+1,i+2}^{-1} 
(t_{c_i}^{-1}(R([\cdots \otimes \times \otimes \otimes \cdots])))
=R([\cdots \times \otimes \times \times \cdots])$, \\
(14) $Y_{i+3,i} Y_{i+2,i} Y_{i+1,i} 
(t_{c_i}(R([\cdots \times \otimes \otimes \otimes \cdots])))
= R([\cdots \otimes \times \times \times \cdots])$. 
\end{proof}

{\it Proof of Lemma \ref{lem:all-Y-hom-short}.}
Let $Y_{m,a}$ be a Y-homeomorphism whose leg is an r-circle. 
By Corollary \ref{cor:G-g-eq-r-circle}, there is an element 
$\phi \in G_g$ such that $\phi(m) = R([\oplus - + \cdots])$ 
or $R([+ \ominus + \cdots])$. 
Therefore $Y_{m,a}$ is $G_g$-equivalent to a short-leg Y-homeomorphism. 
By Lemma \ref{lem:product-Y-homeo}, we get our conclusion. 
\qed
\subsection{Szepietowski's generators for $\Gamma_2(N_g)$}
\begin{figure}[hbtp]
\includegraphics[height=2.7cm]{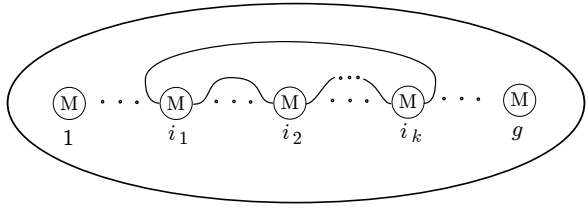}
\caption{
%tmp 
%fig:alpha
%
The curve $\alpha_I$ for $I = \{ i_1, i_2, \ldots, i_k \}$. 
}
\label{fig:alpha}
\end{figure}
We review the finite system of generators for $\Gamma_2(N_g)$ 
introduced in \cite{Szepietowski2}. 
For each non empty subset $I =\{i_1, i_2, \ldots, i_k \}$ of 
$\{1, \ldots, g\}$, 
let $\alpha_I$ be the simple closed curve shown in Figure \ref{fig:alpha}. 
If $I = \{ i \}$, we write $\alpha_i$ instead of $\alpha_{\{i \}}$. 
Szepietowski proved: 
\begin{thm}\cite[Theorem 3.2]{Szepietowski2}
\label{thm:gamma2-gen}
%tmp
%{\bf(thm:gamma2-gen)}
%
For $g \geq 4$, $\Gamma_2(N_g)$ is generated by the following elements. \par
1) $Y_{\alpha_i,\alpha_{\{i,j\}}}$ for $i \not= j$, \par
2) $Y_{\alpha_{\{i,j,k\}}, \alpha_{\{i,j,k,l\}}}$ for $i < j < k< l$. \\
The group $\Gamma_2(N_3)$ is generated by the elements 1). 
\end{thm}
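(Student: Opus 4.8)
The plan is to deduce this from Theorem \ref{thm:level2}, which already asserts that $\Gamma_2(N_g)$ is generated by \emph{all} $Y$-homeomorphisms; it therefore suffices to express an arbitrary $Y$-homeomorphism $Y_{m,a}$ as a product of the elements listed in 1) and 2). The inclusion into $\Gamma_2(N_g)$ of the subgroup $G$ generated by those elements is immediate, since by Theorem \ref{thm:level2} every $Y$-homeomorphism acts trivially on $H_1(N_g;\mathbb{Z}_2)$. For the reverse inclusion the basic mechanism is the conjugation formula $\phi Y_{m,a}\phi^{-1}=Y_{\phi(m),\phi(a)}$, which reduces the problem to moving the pair (leg, arm) into a standard position by elements of $G$, combined with relations that split a single $Y_{m,a}$ into a product of $Y$-homeomorphisms having simpler legs or arms.

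First I would reduce the leg. The leg $m$ is an $M$-circle, so $[m]=\sum_{i\in I}x_i\in H_1(N_g;\mathbb{Z}_2)$ with $|I|$ odd. Using chain-type relations among $Y$-homeomorphisms — in particular the fact that $Y_{m,a}^2$ is the Dehn twist about the boundary of its Klein-bottle-with-hole neighborhood, and that a $Y$-homeomorphism whose leg runs across $|I|\ge 5$ crosscaps can be rewritten in terms of $Y$-homeomorphisms with strictly fewer crosscaps in the leg — I would reduce to the cases $|I|=1$ and $|I|=3$. Next I would apply a change-of-coordinates principle for $N_g$, refined so that the normalizing diffeomorphism lies in $G$: here the subgroup generated by the short-leg elements $Y_{\alpha_i,\alpha_{\{i,j\}}}$ of 1) plays the role of the crosscap-pushing subgroup $\psi(\pi_1(N_{g-1}))$ from the non-orientable Birman exact sequence, and it suffices to move $m$ onto $\alpha_i$ (resp.\ $\alpha_{\{i,j,k\}}$) and then move the arm $a$, which then meets $\alpha_i$ (resp.\ $\alpha_{\{i,j,k\}}$) in one point, onto $\alpha_{\{i,j\}}$ (resp.\ $\alpha_{\{i,j,k,l\}}$). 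At that stage $Y_{m,a}$ is conjugate by an element of $G$ to one of the listed generators, hence lies in $G$.

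The hard part will be the arm-straightening step, carried out \emph{inside} $G$ rather than inside all of $\mathcal{M}(N_g)$: the arm $a$ can wind through many handles and across many crosscaps, and one must dismantle it by successive slides along standard $Y$-homeomorphisms while verifying that each move used is itself a product of the listed generators. I expect this to need an induction on $g$ together with an induction on a geometric complexity of $a$ (its intersection number with a fixed reference system of curves), performing curve surgeries that strictly lower the complexity at each step. The genus $3$ case must be handled separately, because no curve $\alpha_{\{i,j,k,l\}}$ is available; one checks directly that in $N_3$ every weight-$3$ $M$-circle together with a transverse $A$-circle can be normalized using only the weight-$1$-leg generators of 1), which is why $\Gamma_2(N_3)$ needs only those.
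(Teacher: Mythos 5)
A point of context first: the paper does not prove this statement at all --- it is quoted, with citation, from Szepietowski's preprint \cite{Szepietowski2}, and is then \emph{used} (together with Lemma \ref{lem:gamma2-short}) to give a second proof of Lemma \ref{lem:all-Y-hom-short}. So your proposal has to be judged as a free-standing proof attempt, and as such it has a genuine gap.

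The decisive obstruction is homological. Let $G$ denote the subgroup generated by the elements in 1) and 2). Every Y-homeomorphism acts trivially on $H_1(N_g;\mathbb{Z}_2)$ (this is elementary; it is not the content of Theorem \ref{thm:level2}, which is the converse-type statement), so $G\subset\Gamma_2(N_g)$, and for $\phi\in G$ the formula $\phi Y_{m,a}\phi^{-1}=Y_{\phi(m),\phi(a)}$ can never change the mod-$2$ classes $[m]$ and $[a]$. Consequently your normalization step --- ``move $m$ onto $\alpha_i$ (resp.\ $\alpha_{\{i,j,k\}}$) and then move the arm onto $\alpha_{\{i,j\}}$ (resp.\ $\alpha_{\{i,j,k,l\}}$) by elements of $G$'' --- is impossible unless $[m]$ and $[a]$ already equal the classes of those reference curves. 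For example, a Y-homeomorphism with $[m]=x_1+x_2+x_3+x_4+x_5$ and $[a]=x_1+x_6$, or with $[m]=x_1$ and $[a]=x_1+x_2+\cdots+x_6$, is not $G$-conjugate --- indeed not even $\Gamma_2(N_g)$-conjugate --- to any listed generator (their legs have weight $1$ or $3$ and their arms weight $2$ or $4$), so it must be genuinely split as a product. That splitting is exactly where the content of the theorem lies, and your outline supplies no mechanism for it: the relation expressing $Y_{m,a}^2$ as a Dehn twist about the boundary of the Klein-bottle neighborhood does not lower the weight of a leg, no concrete ``chain-type relation'' is exhibited, and the arm reduction is not addressed by relations at all, only by the (unavailable) conjugation. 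Note the contrast with how the present paper handles the closely related Lemma \ref{lem:all-Y-hom-short} in its original proof: there the normalizing group $G_g$ contains Dehn twists ($t_{d_i}$, $t_{a_i}t_{a_{i+2}}t_{c_i}$, and squares) which act nontrivially on $H_1(N_g;\mathbb{Z}_2)$, and it is precisely this extra freedom (Lemma \ref{lem:G-g-eq-r-circle}, Corollary \ref{cor:G-g-eq-r-circle}) that makes leg normalization possible; your $G$, being generated by Y-homeomorphisms only, has no such elements, so the analogous strategy cannot even start. To repair the argument you would have to produce explicit relations (or an induction, e.g.\ via the crosscap pushing homomorphism) writing an arbitrary $Y_{m,a}$ as a product of Y-homeomorphisms with legs of weight $1,3$ and arms of weight $2,4$; that is essentially Szepietowski's theorem itself, so as it stands the proposal assumes the hard part.
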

We show: 
\begin{lem}\label{lem:gamma2-short}
%tmp
%{\bf(lem:gamma2-short)}
%
For arbitrary $i < j < k< l$, 
$Y_{\alpha_{\{i,j,k\}}, \alpha_{\{i,j,k,l\}}}$ is 
$G_g$-equivalent to a short-leg Y-homeomorphism. 
\end{lem}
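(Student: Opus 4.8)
The goal is to show that the leg $\alpha_{\{i,j,k\}}$ of the Y-homeomorphism $Y_{\alpha_{\{i,j,k\}},\alpha_{\{i,j,k,l\}}}$ can be moved by an element of $G_g$ to one of the short legs $x_1,\dots,x_g$. The natural strategy is to identify $\alpha_{\{i,j,k\}}$ (up to isotopy, and after recording its $q_{os}$-values on the relevant M\"obius bands) with an r-circle $R(x)$ for a suitable $x\in H_1(N_g;\mathbb{Z}_2)$, and then invoke the machinery already built in the previous subsection — Lemma \ref{lem:G-g-eq-r-circle}, Corollary \ref{cor:G-g-eq-r-circle} and Lemma \ref{lem:all-Y-hom-short}. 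Concretely, the first step is to check, by drawing the pictures, that $\alpha_{\{i,j,k\}}$ is isotopic to $R(x)$ where $\mathrm{supp}(x)=\{x_i,x_j,x_k\}$; since this curve is the leg of a Y-homeomorphism it is an M-circle with non-orientable complement, so Corollary \ref{cor:G-g-eq-r-circle} applies directly.

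**Key steps.**
First I would recall that $\alpha_I$, as drawn in Figure \ref{fig:alpha}, is exactly the curve obtained by the ``parts of r-circles'' construction of Figure \ref{fig:parts} applied to the element $\sum_{i\in I} x_i$; this is essentially a matter of comparing the two figures, so $\alpha_{\{i,j,k\}}\simeq R(x)$ with $\mathrm{supp}(x)=\{x_i,x_j,x_k\}$ (the signs $\oplus/\ominus$ being determined by whether the indices are odd or even, but as noted in the text one need not track them for the purpose of $G_g$-equivalence of the underlying M-circle). Second, since $\alpha_{\{i,j,k\}}$ occurs as the leg of a Y-homeomorphism, it is an M-circle whose complement is non-orientable, so by Corollary \ref{cor:G-g-eq-r-circle} the r-circle $R(x)$ is $G_g$-equivalent to $R([\oplus - + \cdots])$ or $R([+ \ominus + \cdots])$, i.e. to a circle whose M\"obius band is one of the $x_i$. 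Third, an element of $G_g$ carrying $\alpha_{\{i,j,k\}}$ to such a short M-circle conjugates $Y_{\alpha_{\{i,j,k\}},\alpha_{\{i,j,k,l\}}}$ to a Y-homeomorphism with the same short leg, which is by definition a short-leg Y-homeomorphism; this is precisely the form of the argument used in the proof of Lemma \ref{lem:all-Y-hom-short}.

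**Main obstacle.**
The only genuinely non-routine point is the first step: one must be sure that the curve $\alpha_{\{i,j,k\}}$ as Szepietowski draws it genuinely agrees, up to isotopy on $N_g$, with the r-circle $R(x)$ for the corresponding $x$, or at least that it is $G_g$-equivalent to such an r-circle. If the two conventions do not match on the nose, one would need a short argument — using Y-homeomorphisms in $\mathcal{YS}_g$ and the curve-sliding moves from the proof of Lemma \ref{lem:product-Y-homeo} — to bridge the gap; in particular one should check that $\alpha_{\{i,j,k\}}$ has exactly three M\"obius bands in its support and threads them in the same cyclic order as an r-circle does. Once that identification (or $G_g$-equivalence) is in hand, the rest is an immediate appeal to Corollary \ref{cor:G-g-eq-r-circle} and the conjugation trick, so I expect the bulk of the written proof to consist of one figure comparing $\alpha_{\{i,j,k\}}$ with $R(x)$ and a one-line deduction.
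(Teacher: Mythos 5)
Your overall strategy is sound and genuinely different from the paper's. The paper proves the lemma entirely in terms of Szepietowski's curves: it exhibits explicit elements of $G_g$ (such as $Y_{i,i-2}^{-1}Y_{i-1,i-2}^{-1}t_{d_{i-2}}$) that lower the indices of $\alpha_{\{i,j,k\}}$ by two, reduces to the eight base curves $\alpha_{1,3,4},\alpha_{1,2,3},\alpha_{2,3,5},\alpha_{2,4,6},\alpha_{1,3,5},\alpha_{1,2,4},\alpha_{2,3,4},\alpha_{2,4,5}$, and then checks by figures that these are $G_g$-equivalent to $\alpha_1$ or $\alpha_2$; this keeps the Szepietowski-generator subsection independent of the r-circle machinery. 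You instead route the argument back through Corollary \ref{cor:G-g-eq-r-circle}: if $\alpha_{\{i,j,k\}}$ is, up to $G_g$-equivalence, an r-circle, then, being the leg of a Y-homeomorphism, it is $G_g$-equivalent to $R([\oplus - + \cdots])$ or $R([+ \ominus + \cdots])$, i.e.\ to $x_1$ or $x_2$, and the conjugation identity $\phi Y_{m,a}\phi^{-1}=Y_{\phi(m),\phi(a)}$ finishes the proof. There is no circularity here (the corollary rests only on Lemma \ref{lem:G-g-eq-r-circle}, not on Lemma \ref{lem:all-Y-hom-short}), so the skeleton of your argument is legitimate and shorter than the paper's; the trade-off is that it re-imports the r-circle lemmas which the paper's second proof was designed to avoid.

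The one step you cannot wave away is the identification $\alpha_{\{i,j,k\}}\simeq R(x_i+x_j+x_k)$. The two curves pass once through the same three crosscaps, represent the same mod-$2$ class and have the same $q_{os}$-value, but that alone does not make them isotopic, nor even obviously $G_g$-equivalent: what matters is how each curve passes the crosscaps it skips (between $i$ and $j$, between $j$ and $k$, and outside $[i,k]$). A strand running on one side of a skipped crosscap is in general not isotopic to the strand running on the other side, and changing sides requires an explicit element of $G_g$ --- note that the analogous moves in the proof of Lemma \ref{lem:product-Y-homeo}, such as $Y_{i,i+1}t_{a_i}^{-1}$, are not pure Y-homeomorphisms, so your suggested fallback ``use elements of $\mathcal{YS}_g$'' is not automatic either. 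Thus ``comparing the two figures'' is a genuine proof obligation, not a formality: you must either verify that Szepietowski's convention for $\alpha_I$ and the convention of Figure \ref{fig:parts} thread the skipped crosscaps on the same side, or supply explicit bridging elements of $G_g$. With that check supplied (it is of the same ``by drawing figures'' nature as the steps in the paper's own proof), your argument is complete.
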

\begin{proof}
It suffices to show that, for every $i < j < k$, 
$\alpha_{\{i,j,k\}}$ is $G_g$-equivalent to $\alpha_1$ or $\alpha_2$. 
By drawing figures, we see: 
when $i > 2$, 
$Y_{i,i-2}^{-1} Y_{i-1,i-2}^{-1} t_{d_{i-2}} \alpha_{i,j,k} 
= \alpha_{i-2,j,k}$; 
when $i < j-2$, 
$Y_{j,j-2}^{-1} Y_{j-1,j-2}^{-1} t_{d_{j-2}} \alpha_{i,j,k} 
= \alpha_{i,j-2,k}$; 
when $j < k-2$, 
$Y_{k,k-2}^{-1} Y_{k-1,k-2}^{-1} t_{d_{k-2}} \alpha_{i,j,k} 
= \alpha_{i,j,k-2}$. 
By applying the above action of $G_g$ on $\alpha_{i,j,k}$, 
we see that $\alpha_{i,j,k}$ is $G_g$-equivalent to 
$\alpha_{1,3,4}$, $\alpha_{1,2,3}$, $\alpha_{2,3,5}$, $\alpha_{2,4,6}$, 
$\alpha_{1,3,5}$, $\alpha_{1,2,4}$, $\alpha_{2,3,4}$ or $\alpha_{2,4,5}$. 
By drawing figures of the action of $G_g$ on the above 8 circles, 
we can check that former 4 circles are $G_g$-equivalent to 
$\alpha_1$ and last 4 circles are $G_g$-equivalent to $\alpha_2$. 
\end{proof}
Every Y-homeomorphism $Y$ is an element of $\Gamma_2(N_g)$, 
and by Theorem \ref{thm:gamma2-gen} and Lemma \ref{lem:gamma2-short}, 
we can express $Y$ as a product of Y-homeomorphisms which are $G_g$-equivalent to 
short-leg Y-homeomorphisms. 
Hence, Lemma \ref{lem:all-Y-hom-short} follows. 

\subsection{Generators for $\mathcal{O}_g (q_{os})$}
If we find a system of generators $\{ S_1, \ldots, S_k \}$ for 
$\mathcal{O}_g (q_{os})$ and 
elements $\sigma_1, \ldots, \sigma_k$ of $\mathcal{M}(N_g)$ such that 
$(\sigma_i)_* = S_i$ in $Aut(H_1(N_g ; \mathbb{Z}_2))$, then, 
by the short exact sequence (\ref{eq:short-exact}) and Corollary \ref{cor:level-2-in-Gg}, 
we see that $\mathcal{N}_g(q_{os})$ is generated by 
$G_g \cup \{\sigma_1, \ldots, \sigma_k\}$. 
Nowik showed in \cite[Theorem 3.2.]{Nowik}: 
\begin{thm}\label{thm:generator-Og}
%tmp
%{\bf(thm:generator-Og)}
%
$\mathcal{O}_g (q_{os})$ is generated by the set of elements of the following 
two forms: \\
1. $T_a$ for $a \in H_1(N_g ; \mathbb{Z}_2)$ with $q_{os}(a)=2$, \\
2. $T_a \ T_b \ T_{a+b}$ for $a, b \in H_1(N_g ; \mathbb{Z}_2)$ with 
$q_{os}(a) = q_{os}(b) = q_{os}(a+b) = 0$. \\
\end{thm}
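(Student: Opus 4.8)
The plan is to prove this by induction on $g$ via a Witt‑type extension argument, proving it simultaneously for every $\mathbb{Z}_4$‑valued quadratic form $q$ on $\mathbb{F}_2^{\,g}$ which is diagonalizable as $\langle \pm 1\rangle^{\oplus g}$ (so that $q_{os}$ is one instance and the family is closed under deleting a diagonal basis vector). Write $V=H_1(N_g;\mathbb{Z}_2)$ and let $G\le\mathcal{O}_g(q_{os})$ be the subgroup generated by the two families of elements. First one checks $G\subset\mathcal{O}_g(q_{os})$: as in the computation already in the text, $T_a$ preserves $q$ exactly when $q(a)=2$, while if $q(a)=q(b)=q(a+b)=0$ then $(a\cdot b)_2=0$ and $T_aT_bT_{a+b}$ equals the Eichler transformation $x\mapsto x+(x\cdot b)_2\,a+(x\cdot a)_2\,b$, which manifestly preserves $q$.

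The core of the argument is a transitivity statement: $G$ acts transitively on $\{v\in V:q(v)=1\}$, on $\{v:q(v)=3\}$, on $\{v:q(v)=2\}$, and suitably transitively on the vectors with $q(v)=0$. Recall that for $v=\sum\epsilon_ix_i$ one has $q(v)=p-m\pmod 4$, where $p$ (resp.\ $m$) is the number of support indices $i$ with $q(x_i)=+1$ (resp.\ $-1$); so the $q$‑value of $v$ depends only on the pair $(p,m)$. I would use a short repertoire of moves from the allowed generators: form‑$1$ transvections $T_{x_i+x_j}$ in like‑signed pairs ($q(x_i+x_j)=2$) to permute like‑signed basis vectors freely; a form‑$1$ transvection in a suitable weight‑$4$ vector and the form‑$2$ Eichler map $T_aT_bT_{a+b}$ over a $q$‑isotropic plane such as $\langle x_1+x_2,\,x_3+x_4\rangle$ to change $(p,m)$ to $(p\pm1,m\pm1)$ or to interchange the roles of $+$ and $-$ on a given support, keeping $p-m\pmod 4$ fixed. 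Iterating these normalizes $v$ to a standard representative ($x_1$ when $q(v)=1$, the first $-1$ basis vector when $q(v)=3$, $x_1+x_3$ when $q(v)=2$, $0$ or $x_1+x_2$ when $q(v)=0$), which is transitivity.

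Granting transitivity, the descent is routine. Given $\phi\in\mathcal{O}_g(q_{os})$, pick $\psi\in G$ with $\psi\phi(x_1)=x_1$; since $x_1\cdot x_1=1$ we get the orthogonal splitting $V=\langle x_1\rangle\oplus x_1^{\perp}$, and $\psi\phi$ preserves $x_1^{\perp}$ together with the restriction of $q_{os}$, which is a form in our family of rank $g-1$. By the inductive hypothesis $(\psi\phi)|_{x_1^{\perp}}$ is a product of the two families of generators for $x_1^{\perp}$, and each of these extends by the identity on $\langle x_1\rangle$ to a generator of the same form on $V$ (a transvection in a vector of $q$‑value $2$ stays such; a triple product over a $q$‑isotropic plane stays such). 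Hence $\psi\phi\in G$ and $\phi\in G$. The base cases of small $g$ are finite and checked by hand — e.g.\ $\mathcal{O}_2(q_{os})$ is trivial and $\mathcal{O}_3(q_{os})=\langle T_{x_1+x_3}\rangle$, of order $2$.

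I expect the transitivity step to be the main obstacle: one must exhibit an explicit finite set of moves using \emph{only} transvections in vectors of $q$‑value $2$ and triple products over $q$‑isotropic planes — not arbitrary transvections — and verify by a finite but fiddly bookkeeping in the counts $(p,m)$ modulo $4$ that they normalize every vector of each prescribed $q$‑value, with separate attention to small supports and to the $q(v)=0,\pm1$ cases. One also has to track the distinction between $g$ odd and $g$ even, since the all‑ones vector $\mathbf 1=\sum x_i$ (which is $q$‑isotropic only when $g$ is even and is fixed by every element of $\mathcal{O}_g(q_{os})$) changes the combinatorics of the level sets, and this is why the small‑$g$ base cases are stated separately. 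An alternative that sidesteps most of this is to pass to $W=\mathbf 1^{\perp}$ with the ordinary $\mathbb{Z}_2$‑valued quadratic form $\bar q=q/2$, invoke the classical Dieudonné–Wall description of $O(W,\bar q)$ by transvections in anisotropic vectors, and pull these back to the two families above; the two routes are essentially equivalent.
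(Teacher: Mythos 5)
A preliminary remark: the paper contains no proof of this statement to compare against — Theorem \ref{thm:generator-Og} is quoted verbatim from Nowik (\cite[Theorem 3.2]{Nowik}), and the paper's own work starts afterwards, with Lemma \ref{lem:gen-Og-os-red} reducing Nowik's generating set to a finite one. So your argument would be replacing an external citation rather than paralleling anything in the text.

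As a plan, your Witt-type induction is a reasonable route, and the easy half is correct: $T_a$ preserves $q_{os}$ precisely when $q_{os}(a)=2$, and when $q_{os}(a)=q_{os}(b)=q_{os}(a+b)=0$ one indeed has $(a\cdot b)_2=(a\cdot a)_2=(b\cdot b)_2=0$ and $T_aT_bT_{a+b}(x)=x+(x\cdot b)_2\,a+(x\cdot a)_2\,b$, which preserves $q_{os}$; the descent through $V=\langle x_1\rangle\oplus x_1^{\perp}$ and the base cases you state are also fine. The genuine gap is exactly where you predict it: the transitivity of the subgroup $G$ on the relevant level sets is the entire content of the theorem, and you do not prove it — you describe a ``repertoire of moves'' only qualitatively and defer the verification. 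Concretely, since the like-signed transvections $T_{x_i+x_j}$ (with $q_{os}(x_i+x_j)=2$) permute like-signed basis vectors, a level set is stratified by the pairs $(p,m)$ of the numbers of $+$- and $-$-signed basis vectors in the support, and what must be exhibited is an explicit set of moves, built \emph{only} from $q$-value-$2$ transvections and isotropic triple products, connecting all admissible $(p,m)$ with $p-m$ fixed mod $4$; that combinatorial core is absent from the proposal. In addition, the transitivity claim is false as literally stated when $g$ is odd: the characteristic vector $\mathbf{1}=x_1+\cdots+x_g$ satisfies $q_{os}(\mathbf{1})=1$ and is fixed by every automorphism preserving the intersection form, so no subgroup acts transitively on $\{v:q_{os}(v)=1\}$; you flag $\mathbf{1}$ only in the even-$g$ (isotropic) case. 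This is repairable — the descent only needs transitivity on $\{v:q_{os}(v)=1,\ v\neq\mathbf{1}\}$, and $\phi(x_1)\neq\mathbf{1}$ automatically — but the statement must be corrected and then actually verified, and the same care is needed if you instead take the alternative route through $\mathbf{1}^{\perp}$ and the $\mathbb{Z}_2$-valued form, where the even-$g$ degeneracy ($\mathbf{1}\in\mathbf{1}^{\perp}$) and the surjectivity of the restriction map are further details you have not addressed.
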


Let $\{ x_1, \ldots, x_g \}$ be the basis of $H_1(N_g ; \mathbb{Z}_2)$ 
which is introduced in Figure \ref{fig:o-standard-basis}. 
We obtain a finite system of generators for $\mathcal{O}_g(q_{os})$ 
explicitly. 
\begin{lem}\label{lem:gen-Og-os-red}
%tmp
%{\bf (lem:gen-Og-os-red)}
%
$\mathcal{O}_g (q_{os})$ is generated by 
\begin{align}
&T_{x_i+x_{i+2}} \ (i=1, \ldots, g-2), \label{elm:2}\\
&T_{x_i+x_{i+1}} T_{x_{i+2}+x_{i+3}} T_{x_i+x_{i+1} + x_{i+2}+x_{i+3}} 
\ (i=1, \ldots, g-3). \label{elm:3}
\end{align}
\end{lem}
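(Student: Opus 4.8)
The plan is to deduce Lemma~\ref{lem:gen-Og-os-red} from Nowik's Theorem~\ref{thm:generator-Og} by showing that each generator of the two types appearing there can be written as a product of the elements \eqref{elm:2} and \eqref{elm:3}. Let $H$ denote the subgroup of $\mathcal{O}_g(q_{os})$ generated by the elements \eqref{elm:2} and \eqref{elm:3}. One inclusion is immediate: every element $T_{x_i+x_{i+2}}$ has $q_{os}(x_i+x_{i+2}) = q_{os}(x_i) + q_{os}(x_{i+2}) + 2(x_i\cdot x_{i+2})_2 = \pm1 \pm 1 = 0$ or $2$ in $\mathbb{Z}_4$; since $x_i$ and $x_{i+2}$ carry signs of the same parity (both odd-indexed or both even-indexed in the $o$-standard picture), $q_{os}(x_i) = q_{os}(x_{i+2})$, so $q_{os}(x_i+x_{i+2}) = 2$, and this is a type-1 generator. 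Likewise $q_{os}(x_i+x_{i+1}) = +1 + (-1) = 0$ (for $i$ odd) and similarly for the other three summands, and one checks the three classes $x_i+x_{i+1}$, $x_{i+2}+x_{i+3}$, $x_i+x_{i+1}+x_{i+2}+x_{i+3}$ all have $q_{os}$-value $0$, so \eqref{elm:3} is of type~2. Hence $H \subseteq \mathcal{O}_g(q_{os})$; the real content is the reverse inclusion.

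For the reverse inclusion, the key observation is that the set $Z = \{a \in H_1(N_g;\mathbb{Z}_2) : q_{os}(a) = 0\}$ of \emph{characteristic} (isotropic) classes is acted on transitively, or nearly so, by $H$, and that $H$ contains enough transvections to generate. I would first analyze which basis-type classes lie in $Z$: writing $a = \sum \epsilon_i x_i$, a short computation with the quadratic identity gives $q_{os}(a) = \bigl(\#\{i \text{ odd}: \epsilon_i=1\} - \#\{i \text{ even}: \epsilon_i=1\}\bigr) + 2\binom{k}{2} \pmod 4$ where $k = \#\{i:\epsilon_i=1\}$; so $q_{os}(a)=0$ forces the number of odd-indexed and even-indexed ones to differ by a multiple of $4$ after the quadratic correction — in practice the low-weight characteristic classes are exactly the $x_i+x_j$ with $q_{os}(x_i)=q_{os}(x_j)$ giving value $2$, and weight-$0,2$ combinations with matched signs giving value $0$. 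The strategy is then: (i) show every type-1 generator $T_a$ with $q_{os}(a)=2$ is conjugate, by a product of the $T_{x_i+x_{i+2}}$'s, to one of the listed $T_{x_i+x_{i+2}}$ — i.e., show $H$ acts transitively on $\{a : q_{os}(a) = 2\}$; and (ii) handle type-2 generators $T_aT_bT_{a+b}$ similarly, reducing an arbitrary isotropic pair $(a,b)$ to a ``standard'' pair by conjugation within $H$, then verifying that each standard triple is a word in \eqref{elm:2}–\eqref{elm:3}. Transitivity in (i) follows because conjugation $T_c T_a T_c^{-1} = T_{T_c(a)}$, and the transvections $T_{x_i+x_{i+2}}$ generate an action on the characteristic vectors that moves any weight-$2$ matched-sign class to $x_1+x_3$ (for the odd-sign ones) or $x_2+x_4$ (for the even-sign ones); higher-weight classes of $q_{os}$-value $2$ reduce to these by repeatedly applying transvections that cancel a matched pair of coordinates, an argument parallel to the r-circle reduction in Lemma~\ref{lem:G-g-eq-r-circle}.

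The main obstacle, and where I would spend the most care, is step (ii): reducing a general isotropic triple $T_aT_bT_{a+b}$ to the explicit generators \eqref{elm:3}. Here one cannot simply conjugate, because conjugating $T_aT_bT_{a+b}$ by $T_c$ gives $T_{T_c(a)}T_{T_c(b)}T_{T_c(a+b)}$, which is again of the same form but with $(a,b)$ replaced by $(T_c(a), T_c(b))$ — so I need that $H$ acts transitively on ordered pairs $(a,b)$ of isotropic classes with $a\cdot b$ fixed and $a+b$ isotropic, \emph{and} I need a supply of ``local'' type-2 relations that let me build one standard triple from adjacent ones (analogous to how the lantern relation was used in Lemma~\ref{lem:generator}). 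I expect that the relations needed are: (a) $T_aT_bT_{a+b}$ when $a\cdot b = 0$ splits as $T_a \cdot T_b \cdot T_{a+b}$ with $T_{a+b} = T_aT_b$ — wait, no: when $a\cdot b=0$ the three transvections commute and $T_{a+b}=T_aT_b$ only if additionally... — so in fact the genuinely new type-2 generators occur when $a\cdot b = 1$, and then $T_aT_bT_{a+b}$ has order $3$ (it is a rotation in the hyperbolic plane spanned by $a,b$). The reduction must track the symplectic-type data $(a\cdot b)_2$ together with the $q_{os}$-values, and the final verification — that each of the finitely many ``standard'' triples equals a product of the $g-3$ listed elements together with the $T_{x_i+x_{i+2}}$'s — is a bounded but fiddly computation in $\mathrm{Aut}(H_1)$ that I would organize by induction on $g$, peeling off the last two coordinates exactly as in the even/odd induction closing the proof of Lemma~\ref{lem:G-g-eq-r-circle}.
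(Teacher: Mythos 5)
Your overall strategy (reduce to Nowik's Theorem \ref{thm:generator-Og} and express each of his two types of generators as words in (\ref{elm:2}) and (\ref{elm:3}) by normalizing the classes involved with the transvections at hand) is the same as the paper's, and your step (i) for the type-1 generators is essentially right in outline, though as stated it is too weak: the transvections $T_{x_i+x_{i+2}}$ alone preserve the number of odd-indexed and of even-indexed crosscaps occurring in $a$, so they cannot bring a general class with $q_{os}(a)=2$ (e.g.\ one of length $6$) down to $x_1+x_3$; you must interleave the elements (\ref{elm:3}) to shorten the class, which is exactly what the paper's induction on the lengths $l_o,l_e$ does.

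The genuine gap is in your step (ii). You talk yourself into the conclusion that the ``genuinely new'' type-2 generators are those with $(a\cdot b)_2=1$, giving order-3 rotations of a hyperbolic plane. But the hypotheses $q_{os}(a)=q_{os}(b)=q_{os}(a+b)=0$ together with $q_{os}(a+b)=q_{os}(a)+q_{os}(b)+2\times(a\cdot b)_2$ force $(a\cdot b)_2=0$ (and likewise $(a\cdot a)_2=(b\cdot b)_2=0$), so the case you propose to concentrate on is empty. Conversely, the case you dismiss is the whole content: over $\mathbb{Z}_2$ one does \emph{not} have $T_{a+b}=T_aT_b$ when $a\cdot b=0$, so $T_aT_bT_{a+b}$ is a nontrivial commuting product of three involutions, and writing it in the generators (\ref{elm:2})--(\ref{elm:3}) is the hard part of the proof. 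Moreover your fallback plan --- transitivity of the generated subgroup on isotropic pairs --- fails: every transvection vector in (\ref{elm:2}) and (\ref{elm:3}) has even length, so the class $x_1+\cdots+x_g$ (isotropic when $g$ is even) is fixed by the entire subgroup, and pairs $[a,b]$ with $a+b$ or $a$ equal to this class cannot be normalized away. The paper must treat this exceptional orbit separately: it reduces such triples to $T_{x_1+x_2}\,T_{x_3+\cdots+x_g}\,T_{x_1+x_2+x_3+\cdots+x_g}$ and then verifies the identity expressing this element as $T_{x_1+x_2}T_{x_3+x_4}T_{x_1+x_2+x_3+x_4}$ times $T_{x_1+x_2}T_{x_5+\cdots+x_g}T_{x_1+x_2+x_5+\cdots+x_g}$, the latter factor being handled by a further conjugation and induction. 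Nothing in your proposal supplies this exceptional-case analysis, so the argument as planned would not close.
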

\begin{proof}
We write any element $v$ of $H_1(N_g;\mathbb{Z}_2)$ 
as $v = x_{i_1} + \ldots + x_{i_m}$ such that $i_1 < \ldots <i_m$ 
and call $m$ {\em the length\/} of $v$, 
or as $v = (x_{2j_1 + 1} + \cdots + x_{2j_k + 1}) \oplus 
(x_{2j_{k+1}} + \cdots + x_{2j_m})$ such that 
$j_1 \leq \cdots \leq j_k$, $j_{k+1} \leq \cdots \leq j_m$ 
and call $(x_{2j_1 + 1} + \cdots + x_{2j_k + 1})$ 
{\em the odd part\/} of $v$, and 
$(x_{2j_{k+1}} + \cdots + x_{2j_m})$ 
{\em the even part\/} of $v$. 
Two elements $v$, $w$ of $H_1(N_g;\mathbb{Z}_2)$ is 
{\em (\ref{elm:2})-equivalent\/} $v \overset{\ref{elm:2}}{\sim}w$ 
if there is a product $T$ of (\ref{elm:2}) such that $T(v) = w$, 
and define {\em (\ref{elm:3})-equivalence\/} 
$v \overset{\ref{elm:3}}{\sim} w$ and 
{\em (\ref{elm:2}) and (\ref{elm:3})-equivalence\/} 
$v \overset{\ref{elm:2},\ref{elm:3}}{\sim} w$ in the same way. 
We remark that if $v \overset{\ref{elm:2},\ref{elm:3}}{\sim} w$ 
then there is a product $T$ of (\ref{elm:2}) and (\ref{elm:3}) 
such that $T_w = T T_v T$. 

Any element (\ref{elm:2}) acts only on the odd part of $v$ or only 
on the even part of $v$. 
For example, when $i < j < k$, 
$$
\begin{aligned}
T_{x_{2j-1} + x_{2j+1}} 
((\cdots + x_{2i-1} + x_{2j+1} + &x_{2k+1} + \cdots) \oplus (\cdots)) = \\
&((\cdots + x_{2i-1} + x_{2j-1} + x_{2k+1} + \cdots) \oplus (\cdots)). 
\end{aligned}
$$
Therefore, if we define $l_o (v) =$ the length of the odd part of $v$, 
and $l_e (v)=$ the length of the even part of $v$, 
then $v \overset{\ref{elm:2}}{\sim}
(x_1 + x_3 + \cdots + x_{2l_o(v)-1}) \oplus (x_2 + x_4 + \cdots + x_{2l_e(v)})$. 
Hence, $v \overset{\ref{elm:2}}{\sim} w$ 
if and only if $l_o(v) = l_o(w)$ and $l_e(v) = l_e(w)$. 

When $p<i, i+3<s$, $\{ i, i+1 \} = \{q, q'\}$ 
and $\{ i+2, i+3 \} = \{r, r'\}$, 
the element (\ref{elm:3}) acts as follows. 
$$
\begin{aligned}
&T_{x_{i} + x_{i+1}} T_{x_{i+2}+x_{i+3}} 
T_{x_{i} + x_{i+1} + x_{i+2} + x_{i+3}} 
(\cdots + x_p + x_q + x_r + x_s + \cdots)  \\
&= T_{x_{i} + x_{i+1}} T_{x_{i+2}+x_{i+3}}  
(\cdots + x_p + x_q + x_r + x_s + \cdots)  \\
&= \cdots + x_p + x_{q'} + x_{r'} + x_s + \cdots,  \\
&T_{x_{i} + x_{i+1}} T_{x_{i+2}+x_{i+3}} 
T_{x_{i} + x_{i+1} + x_{i+2} + x_{i+3}} 
(\cdots + x_p + x_q + x_s + \cdots)  \\
&= T_{x_{i} + x_{i+1}} T_{x_{i+2}+x_{i+3}}  
(\cdots + x_p + x_{q'} + x_r + x_{r'} + x_s + \cdots) \\
&= \cdots + x_p + x_q + x_r + x_{r'} + x_s + \cdots, \\
&T_{x_{i} + x_{i+1}} T_{x_{i+2}+x_{i+3}} 
T_{x_{i} + x_{i+1} + x_{i+2} + x_{i+3}} 
(\cdots + x_p + x_r + x_s + \cdots)  \\
&= T_{x_{i} + x_{i+1}} T_{x_{i+2}+x_{i+3}}  
(\cdots + x_p + x_q + x_{q'} + x_{r'} + x_s + \cdots) \\
&= \cdots + x_p + x_q + x_{q'}+ x_r + x_s + \cdots.  
\end{aligned} 
$$

We will show that every element of the first form in Theorem \ref{thm:generator-Og} 
is a product of (\ref{elm:2}) and (\ref{elm:3}). 
Let $a$ be an element of $H_1(N_g ; \mathbb{Z}_2)$ such that $q_{os}(a)=2$. 
Then, $2 \equiv l_o(a) - l_e(a) \mod 4$. 
Therefore, there are two cases $l_o(a) = l_e(a) + 4t + 2$ or 
$l_e(a) = l_o(a) + 4t +2$ ($t \in \mathbb{Z}_{\geq 0}$). 
For the first case, 
$$
\begin{aligned}
a &\overset{\ref{elm:2}}{\sim} 
(x_1 + x_3) + (x_4 + x_5) + \cdots + (x_{2i} + x_{2i+1}) + 
(x_l + x_{l+2} + x_{l+4} + x_{l+6}) + \cdots \\
& \cdots + (x_m + x_{m+2} + x_{m+4} + x_{m+6}). 
\end{aligned}
$$
For the second case, 
$$
\begin{aligned}
a &\overset{\ref{elm:2}}{\sim} 
(x_2 + x_4) + (x_5 + x_6) + \cdots + (x_{2i+1} + x_{2i+2}) + 
(x_l + x_{l+2} + x_{l+4} + x_{l+6}) + \cdots \\
& \cdots + (x_m + x_{m+2} + x_{m+4} + x_{m+6}). 
\end{aligned}
$$  
We see 
$$
\begin{aligned}
&(x_2 + x_4) + (x_5 + x_6) + \cdots + (x_{2i+1} + x_{2i+2}) + 
(x_l + x_{l+2} + x_{l+4} + x_{l+6}) + \cdots \\
& \cdots + (x_m + x_{m+2} + x_{m+4} + x_{m+6}) \overset{\ref{elm:3}}{\sim} \\
&(x_1 + x_3) + (x_5 + x_6) + \cdots + (x_{2i+1} + x_{2i+2}) + 
(x_l + x_{l+2} + x_{l+4} + x_{l+6}) + \cdots \\
& \cdots + (x_m + x_{m+2} + x_{m+4} + x_{m+6}) \underset{\ref{elm:2}}{\sim} \\
&(x_1 + x_3) + (x_4 + x_5) + \cdots + (x_{2i} + x_{2i+1}) + 
(x_l + x_{l+2} + x_{l+4} + x_{l+6}) + \cdots \\
& \cdots + (x_m + x_{m+2} + x_{m+4} + x_{m+6}),  
\end{aligned}
$$
where $\overset{\ref{elm:3}}{\sim}$ is by 
$T_{x_1+x_2} T_{x_3+x_4} T_{x_1 + x_2 + x_3 + x_4}$. 
Therefore, it suffices to consider the first case. 
We see 
$
(x_l + x_{l+2} + x_{l+4} + x_{l+6}) \overset{\ref{elm:3}}{\sim} 
(x_l + x_{l+2} + x_{l+3} + x_{l+5}) \overset{\ref{elm:3}}{\sim} 
(x_l + x_{l+5}) \overset{\ref{elm:2}}{\sim} 
(x_l + x_{l+1})
$,
where the first $\overset{\ref{elm:3}}{\sim}$ 
is by $T_{x_{l+3}+x_{l+4}} T_{x_{l+5}+x_{l+6}} 
T_{x_{l+3} + x_{l+4} + x_{l+5} + x_{l+6}}$, 
and the second $\overset{\ref{elm:3}}{\sim}$ 
is by $T_{x_{l+2}+x_{l+3}} T_{x_{l+4}+x_{l+5}} 
T_{x_{l+2} + x_{l+3} + x_{l+4} + x_{l+5}}$. 
Hence, 
$ a \overset{\ref{elm:2}, \ref{elm:3}}{\sim} 
(x_1 + x_3) + (x_4 + x_5) + \cdots + (x_{2n} + x_{2n+1})$. 
Furthermore, 
$ (x_1 + x_3) + (x_4 + x_5) + (x_6 + x_7) + \cdots + (x_{2n} + x_{2n+1}) 
= (x_1 + x_3 + x_4) + x_5 + (x_6 + x_7) + \cdots + (x_{2n} + x_{2n+1}) 
\overset{\ref{elm:3}}{\sim} 
 x_1 + x_5 + (x_6 + x_7) + \cdots + (x_{2n} + x_{2n+1}) 
\overset{\ref{elm:2}}{\sim} 
 x_1 + x_3 + (x_4 + x_5) + \cdots + (x_{2(n-1)} + x_{2(n-1)+1}),  
$
where $\overset{\ref{elm:3}}{\sim}$ is by 
$T_{x_1+x_2} T_{x_3+x_4} T_{x_1+x_2+x_3+x_4}$. 
Therefore, by the induction on $n$, we see 
$ a \overset{\ref{elm:2}, \ref{elm:3}}{\sim} x_1 + x_3$. 
Hence $T_a$ is a product of (\ref{elm:2}) and (\ref{elm:3}) if $q_{os}(a)=2$.  

We will show that every element of the second form in Theorem \ref{thm:generator-Og} 
is a product of (\ref{elm:2}) and (\ref{elm:3}). 
Let $a$ and $b$ be elements of $H_1(N_g ; \mathbb{Z}_2)$ such that 
$q_{os}(a) = q_{os}(b) = q_{os}(a+b) = 0$. 
Then $0 = q_{os} (a+b) = q_{os}(a) + q_{os}(b) + (a \cdot b)_2 = (a \cdot b)_2$, 
$0 = q_{os}(2a) = q_{os} (a) + q_{os} (a) + (a \cdot a)_2 = (a \cdot a)_2$, 
by the same reason, $0 = (b \cdot b)_2$, 
hence $(a+b \cdot a)_2 = (a+b \cdot b)_2 = 0$. 
Therefore, $T_a$, $T_b$ and $T_{a+b}$ commute each other. 
For the pairs $[a_1, b_1]$ and $[a_2, b_2]$ of elements of 
$H_1(N_g ; \mathbb{Z}_2)$ which satisfies 
$q_{os} (a_i) = q_{os} (b_i) = q_{os} (a_i+b_i) = 0$ ($i=1,2$), 
we define the equivalence 
$[a_1, b_1] \overset{\ref{elm:2}}{\sim} [a_2, b_2]$ 
if there is a product $T$ of (\ref{elm:2}) such that 
$T(a_1) = a_2$ and $T(b_1) = b_2$. 
The equivalences $[a_1, b_1] \overset{\ref{elm:3}}{\sim} [a_2, b_2]$ 
and $[a_1, b_1] \overset{\ref{elm:2},\ref{elm:3}}{\sim} [a_2, b_2]$ 
are defined in the same way. 

Let $a$, $b$ be elements of $H_1(N_g ; \mathbb{Z}_2)$ 
such that $q_{os}(a) = q_{os}(b) = q_{os}(a+b)=0$. 
By the same argument applied for elements of the first from in 
Theorem \ref{thm:generator-Og}, 
we see $a \overset{\ref{elm:2},\ref{elm:3}}{\sim} 
(x_1 + x_2) + \cdots +(x_{2n-1} + x_{2n})$. 

If $2n \not= g$, then 
$(x_1 + x_2) + \cdots +(x_{2n-3} + x_{2n-2})+(x_{2n-1} + x_{2n}) 
= (x_1 + x_2) + \cdots + x_{2n-3} + (x_{2n-2}+ x_{2n-1} + x_{2n}) 
\overset{\ref{elm:3}}{\sim} 
(x_1 + x_2) + \cdots + x_{2n-3} + x_{2n} 
\overset{\ref{elm:2}}{\sim}
(x_1 + x_2) + \cdots + x_{2n-3} + x_{2n-2} 
=(x_1 + x_2) + \cdots + (x_{2(n-1)-1} + x_{2(n-1)}),   
$
where $\overset{\ref{elm:3}}{\sim}$ is by 
$T_{x_{2n-2}+ x_{2n-1}} T_{x_{2n} + x_{2n+1}}$ 
$T_{x_{2n-2}+ x_{2n-1} + x_{2n}+x_{2n+1}}$. 
Therefore, by the induction on $n$, we see 
$a \overset{\ref{elm:2},\ref{elm:3}}{\sim} x_1 + x_2$. 
Hence, $[a,b] \overset{\ref{elm:2},\ref{elm:3}}{\sim} [x_1+x_2, b']$. 
Since $0 = (x_1+x_2 \cdot b')_2$, 
$b' = x_1 + x_2 + x_l + \cdots + x_m$ or $b'= x_l + \cdots + x_m$ where $l \geq 3$. 
For these 2 cases, 
$T_{x_1+x_2} T_{b'} T_{x_1+x_2+b'} = T_{x_1+x_2} T_{x_1+x_2+b'} T_{b'}$ 
are the same. 
So, we may suppose $b'= x_l + \cdots + x_m$. 
We see $[a,b] \overset{\ref{elm:2},\ref{elm:3}}{\sim} 
[x_1 + x_2, (x_3 + x_4) + \cdots + (x_{2k-1} + x_{2k})]$, 
by applying (\ref{elm:2}) and (\ref{elm:3}) whose transvections 
are about $v$ such that $\text{supp}(v)$ contains neither $x_1$ nor $x_2$. 
If $2k \not= g$, by applying to $b$ the same argument as to $a$, 
we see $[a,b] \overset{\ref{elm:2}, \ref{elm:3}}{\sim} [x_1 + x_2, x_3 + x_4]$.
If $2k = g$, then $T_a \ T_b \ T_{a+b}$ is equal to 
$T_{x_1+x_2} T_{x_3 + \cdots + x_g} T_{x_1 + x_2 + x_3 + \cdots + x_g}$. 
In the last paragraph of this proof, we show that 
$T_{x_1+x_2} T_{x_3 + \cdots + x_g} T_{x_1 + x_2 + x_3 + \cdots + x_g}$ 
is a product of  (\ref{elm:2}) and (\ref{elm:3}). 

If $2n=g$, then $(v \cdot (x_1 + x_2) + \cdots +(x_{2n-1} + x_{2n}))_2 = 0$ 
for every $v \in H_1(N_g;\mathbb{Z}_2)$ used for the transvections in 
(\ref{elm:2}) and (\ref{elm:3}). 
Therefore, we see 
$[a,b] \overset{\ref{elm:2}, \ref{elm:3}}{\sim} 
[(x_1 + x_2) + \cdots + (x_{g-1} + x_g), 
(x_{2i+1} + x_{2i+2}) + \cdots + (x_{g-1} + x_g)]$. 
This means that $T_a \ T_b \ T_{a+b} = T_{a+b} \ T_b \ T_a$ is conjugate to 
$T_{(x_1 + x_2) + \cdots + (x_{2i-1} + x_{2i})}\  
T_{(x_{2i+1} + x_{2i+2}) + \cdots + (x_{g-1} + x_g)} \ 
T_{(x_1 + x_2) + \cdots + (x_{g-1} + x_g)}$ 
by (\ref{elm:2}) and (\ref{elm:3}). 
Moreover, 
$[((x_1 + x_2) + \cdots + (x_{2i-3} + x_{2i-2}) + (x_{2i-1} + x_{2i})), 
(x_{2i+1} + x_{2i+2}) + \cdots + (x_{g-1} + x_g)] 
\overset{\ref{elm:3}}{\sim} 
[(x_1 + x_2) + \cdots + x_{2i-3} + x_{2i}, 
x_{2i-2}+x_{2i-1}+(x_{2i+1}+x_{2i+2})+ \cdots + (x_{g-1} + x_g)] 
\overset{\ref{elm:2}}{\sim}
[(x_1 + x_2) + \cdots + (x_{2i-3} + x_{2i-2}), 
(x_{2i-1}+x_{2i})+(x_{2i+1}+x_{2i+2})+ \cdots + (x_{g-1} + x_g)]$, 
where $\overset{\ref{elm:3}}{\sim}$ is by 
$T_{x_{2i-2}+x_{2i-1}} T_{x_{2i}+x_{2i+1}} T_{x_{2i-2}+x_{2i-1}+x_{2i}+x_{2i+1}}$, 
and $\overset{\ref{elm:2}}{\sim}$ is by $T_{x_{2i-2}+x_{2i}}$. 
By repeatedly applying the above argument, we see that 
$T_a \ T_b \ T_{a+b}$ is conjugate to 
$T_{x_1+x_2} T_{x_3 + \cdots + x_g} T_{x_1 + x_2 + x_3 + \cdots + x_g}$ 
by (\ref{elm:2}) and (\ref{elm:3}). 

When $g$ is even and $g \geq 6$, by checking the action of transvections 
on the basis $\{ x_1, \ldots, x_g \}$ of $H_1(N_g ; \mathbb{Z}_2)$, 
we show $T_{x_1+x_2} T_{x_3 + \cdots + x_g} T_{x_1 + x_2 + x_3 + \cdots + x_g}$ 
$=$ $T_{x_1+x_2} T_{x_3+x_4}$ $T_{x_1+x_2+x_3+x_4} \cdot 
T_{x_1+x_2} T_{x_5 + \cdots + x_g} T_{x_1+x_2+x_5 + \cdots + x_g}$. 
Since $[x_1+x_2, x_5 + \cdots + x_g] \overset{\ref{elm:2}}{\sim} 
[x_1+x_2, x_3 + \cdots + x_{g-2}]$, 
we see that $T_{x_1+x_2} T_{x_5 + \cdots + x_g} T_{x_1+x_2+x_5 + \cdots + x_g}$
is a product of (\ref{elm:2}) and (\ref{elm:3}) by using the argument 
for the case where $2n(=g-2)\not= g$. 
\end{proof}
Since $(t_{d_i})_* = T_{x_i+x_{i+2}}$,  
$(t_{a_i} t_{a_{i+2}} t_{c_i})_*$ $=$ 
$T_{x_i+x_{i+1}} T_{x_{i+2}+x_{i+3}} T_{x_i+x_{i+1} + x_{i+2}+x_{i+3}}$, 
and $t_{d_i}$, $t_{a_i} t_{a_{i+2}} t_{c_i}$ $\in G_g$, 
we see $G_g = \mathcal{N}_g(q_{os})$, hence Theorem \ref{thm:generator-pin} follows.  
\subsection*{Acknowledgments}
The author wishes to express his gratitude to Professors Akio Kawauchi and 
Masamichi Takase for advising the author for considering on $o$-standard embeddings 
and Professor B{\l}a\.{z}ej Szepietowski for informing and sending 
the author his preprint \cite{Szepietowski2}. 

\end{document}